\title[Higher-order interaction model] 
{Higher-order interaction model from geometric measurements}
\author[D. Kim]{Dohyun Kim}
\address[Dohyun Kim]{\newline School of Mathematics, Statistics and Data Science, \newline Sungshin Women's University, Seoul 02844, Republic of Korea}
\email{dohyunkim@sungshin.ac.kr}
\author[H. Park]{Hansol Park}
\address[Hansol Park]{\newline Department of Mathematics,\newline Simon Fraser University, 8888 University Dr., Burnaby, BC V5A 1S6, Canada}
\email{hansol\_park@sfu.ca}
\email{hansol960612@snu.ac.kr}
\author[W. Shim]{Woojoo Shim}
\address[Woojoo Shim]{\newline School of Mathematics, Korea Institute for Advanced Study,\newline Seoul, 02455, Republic of Korea}
\email{cosmo.shim@gmail.com}
\newtheorem{theorem}{Theorem}[section]
\newtheorem{lemma}{Lemma}[section]
\newtheorem{corollary}{Corollary}[section]
\newtheorem{proposition}{Proposition}[section]
\newtheorem{remark}{Remark}[section]
\newtheorem{example}{Example}[section]
\newcommand{\bbr}{\mathbb R}
\newcommand{\dd}{\mathrm{d}}
\newcommand{\dt}{\mathrm{d}t}
\newcommand{\kp}{\kappa}
\begin{document}

\date{\today}

\subjclass[2020]{34D06, 70F10, 70G60} \keywords{$n$-simplex, data analysis, aggregation model}

\thanks{\textbf{Acknowledgment.} The work of D. Kim was supported by the National Research Foundation of Korea (NRF) grant funded by the Korea government (MSIT) (No.2021R1F1A1055929), the work of H. Park was supported by Pacific Institute for the Mathematical Science(PIMS), Canada postdoctoral fellowship, and the work of W. Shim is partially supported by Samsung Science and Technology Foundation (SSTF-BA1401- 51).\\
\textbf{Corresponding Author:} Hansol Park}

\begin{abstract}

We introduce a higher simplicial generalization of the linear consensus model which shares several common features. The well-known linear consensus model is a gradient flow with a sum of squares of distances between each pair of points. Our newly suggested model is also represented as a gradient flow equipped with total $n$-dimensional volume functional consisting of $n+1$ points as a potential. In this manner, the linear consensus model coincides with the case of $n=1$ where distance is understood as the 1-dimensional volume. From a simple mathematical analysis, one can easily show that the linear consensus model (a gradient flow with 1-dimensional volume functional) collapses to one single point,  which can be considered as a 0-complex. By extending this result, we show that a solution to our model converges to an $(n-1)$-dimensional affine subspace. We also perform several numerical simulations with an efficient algorithm that reduces the computational cost. 

\end{abstract}

\maketitle \centerline{\date}


\section{Introduction} \label{sec:1}
Mathematical modeling allows us to analyze various natural phenomena and find optimal solutions. Two pioneers in this area, Kuramoto \cite{Ku, Ku2} and Winfree \cite{Wi}, catalyzed studying mathematical modeling and its applications. Especially, two mathematical models introduced by these two pioneers are pairwise interaction models which can be written in the following form:
\begin{align}\label{A-0}
\displaystyle\dot{x}_i=\sum_{j=1}^N I_{ij}(x_i, x_j),\quad \forall~i\in [N]:=\{1, 2, \cdots, N\},
\end{align}
where $N$ is the number of the particles and $I_{ij}$ is the interaction depending on $x_i$ and $x_j$, i.e., the dynamics of $i$-th particle is determined by composition of the interactions between $i$-th and $j$-th particles for all $1\leq j\leq N$. Inheriting this common form from two models introduced in \cite{Ku2, Wi}, a lot of researchers have focused on these pairwise interaction models \cite{BGL, CLC, DB, JC, KJ, Lo2,  TB}. Also, these models are applied in various areas: swarming of robots \cite{MPG}, pattern formation of biological groups \cite{TB}, unmanned aerial vehicle \cite{CCF}.
However, interactions in real-world systems are more complicated than these pairwise interaction models. Lately, to consider more complicated interactions, constructing higher-order interaction models is in the limelight. Since there are various ways to generalize the pairwise interaction model to a higher-order interaction model, lots of models are derived independently: a higher-order model has been assumed directly obtained from replacing the interaction term in \eqref{A-0} in \cite{GB}, a higher-order model has been obtained from generalizing the potential function analytically in \cite{Lo1, Lo3}. Ecologists also use these higher-order models in \cite{SB, YMB} practically. 

Although the modeling starts from different ideas, the common form of higher-order models can be written as 
\begin{align}\label{A-01}
\dot{x}_i=\sum_{j_1,\cdots, j_m=1}^NI_{ij_1\cdots j_m}(x_i, x_{j_1}, \cdots, x_{j_m}),\quad\forall~i\in[N],
\end{align}
where $I_{ij_1\cdots j_m}$ is the interaction depends on $(m+1)$ particles $x_i$, $x_{j_1}$, $\cdots$, and $x_{j_m}$. As we can compare two systems \eqref{A-0} and \eqref{A-01}, the interaction term depends on more than two particles. The goal of this paper is to introduce a new interaction term $I_{ij_1\cdots j_m}$ which might be considered somewhat natural according to its geometrical interpretation.

As a smooth warmup, we begin with the linear consensus model as the simplest aggregation model on $\bbr^d$:
\begin{align}\label{B-1}
\begin{cases}
\displaystyle\dot{x}_i=\frac{\kappa_1}{N}\sum_{k=1}^N (x_k-x_i),\quad t>0,\\
x_i(0)=x_i^0\in\bbr^d,\quad\forall~i\in[N],
\end{cases}
\end{align}
where $\kappa_1$ is a non-negative coupling strength and $N$ is the number of particles. Recall that the linear consensus model can be considered as the Kuramoto model without the constraint of particles on the circle(refer \cite{Lo1}).

One of the notable features of \eqref{B-1} is that it can be represented as a gradient flow with an analytic potential. More precisely, if we define 
\begin{align}\label{B-2}
\mathcal V_1(\mathcal X) :=\frac{\kappa_1}{4N}\sum_{k, \ell=1}^N\|x_k-x_\ell\|^2,\quad \mathcal X :=(x_1,\cdots,x_N)\in \bbr^{dN},
\end{align}
then \eqref{B-1} is written as 
\begin{align*}
\dot{x}_i=-\nabla_{x_i} \mathcal V_1(\mathcal X).
\end{align*}
The use of subscript $1$ in $\mathcal{V}_1$ and $\kappa_1$ is to use a coherent notation with the higher-order potentials in later argument. Furthermore, simple calculation straightforwardly yields the conservation of the center of mass defined by $\bar{x}:=\frac{1}{N}\sum_{k=1}^Nx_k$, and the explicit solution to (1.3) can be obtained as follows:
\[
x_i(t)=(1-e^{-\kappa_1t})\bar{x}^0+e^{-\kappa_1t}x_i^0,\quad\forall~i\in[N].
\]
Hence, all $x_i$ converges exponentially to the initial center of mass and potential $\mathcal V_1(\mathcal X)$  also vanishes exponentially. In this regard, we would say that system \eqref{B-1} is a gradient flow equipped with total distance functional as a potential, and for the asymptotic behavior, a solution to the system collapses to a common point and the potential converges to zero. 

Our work is dedicated to generalizing \eqref{B-1} to a new system while preserving several similar properties of \eqref{B-1}. We extend the concept of distance (or length) between to a one-dimensional volumne of 1-simplex.  In the same vein, the area bounded by three points can be understood as a two-dimensional volume of 2-simplex. In this manner, we are interested in the following simple question:\\
\begin{quote}
(Q): ``What if we consider a gradient flow with the potential as a total squared $n$-dimensional volume functional instead of length functional?''\\
\end{quote}

The main results of this paper are three-fold. First, we introduce a new gradient flow where the distance between two points in $\mathcal{V}_1$ is generalized to $n$-dimensional volume functional between $(n+1)$ points. More precisely, for $n+1$ points among $N$ points($n\ll N$) in $\bbr^d$, we consider the $n$-dimensional volume of those points. e.g., length for $n=2$, volume for $n=3$, etc. Second, we study the asymptotic behavior of the proposed system. For the linear consensus model (the case of $n=1$), it is well-known that a solution always converge to a single point. In other words, a solution to the gradient flow with 1-dimensional volume tends to a 0-simplex. Hence, since our system is a gradient flow with $n$-dimensional volume, it is natural to expect that a solution converges to an $(n-1)$-dimensional affine subspace. We indeed show that this is true. Lastly, we suggest a reduced model which exhibits similar emergent behaviors to reduce the computational cost. In order to obtain the desired convergence toward affine subspace, it suffices to consider a fewer interaction.

The rest of this paper is organized as follows. We generalize the linear consensus model's potential and construct a new model in Section \ref{sec:3}. Using some geometric property to represent the equilibrium set, we provide the long time behaviors of the system in Section \ref{sec:4}. Since the computational cost is too large when we consider the higher dimensional simplex, we discuss the way to reduce the computational while preserving a similar long time behaviors in Section \ref{sec:5}. Finally, Section \ref{sec:6} devotes to the conclusion of the paper.
\\
\newline
\textbf{Notations.} 
For $n\geq d$, we define the set of $n$-dimensional affine subspace in $\bbr^d$ as follows:
\[
\mathcal{A}_d^n:=\left\{P\subset \bbr^d: P\text{ is $n$-dimensional affine subspace}\right\}.
\]


\section{Construction of models}\label{sec:3}
\setcounter{equation}{0}
In this section, we introduce a new multi-particle interaction model naturally generalized from linear consensus model \eqref{B-1}. Recall that the linear consensus model is formulated as a gradient flow with potential $\mathcal V_1$ \eqref{B-2}, where the potential $\mathcal V_1$ is given as the sum of squared lengths between every two points. Since \textit{length} is a one-dimensional object, we generalize it to higher dimensional object. To this end, for any fixed natural number $n \geq 1$, let $\mathrm{Vol}_n(x_1, x_2, \cdots, x_{n+1})$ be the $n$-dimensional volume of $n$-simplex consisting of $n+1$ points $\{ x_1, x_2, \cdots, x_{n+1}\}$. Since $\|x_i-x_j\|$ is the length between two points $x_i$ and $x_j$ , it can be understood as the $1$-dimensional volume of $1$-simplex with vertices $x_i$ and $x_j$. If we slightly abuse the notation, we rewrite potential $\mathcal V_1$ as 
\begin{align}\label{C-0}
\mathcal V_1(\mathcal X)=\frac{\kappa_1}{4N}\sum_{j_1, j_2=1}^N\mathrm{Vol}_1(x_{j_1},x_{j_2})^2.
\end{align}
As the generalization of potential in \eqref{C-0}, we naturally define  
\begin{align}\label{C-0-1}
\mathcal V_n(\mathcal X) :=\frac{\kappa_n}{2(n+1)N^{n}}\sum_{j_1, \cdots, j_{n+1}=1}^N \mathrm{Vol}_n(x_{j_1}, \cdots, x_{j_{n+1}})^2,
\end{align}
where $\kp_n$ denotes the (attractive) coupling strength. Here, $\mathcal{V}_n$ is called \textit{$n$-simplex potential} with the coupling strength $\kp_n$. Then, the model reads as  
\begin{align}\label{C-1}
\begin{cases}
\displaystyle\dot{x}_i=-\nabla_{x_i} \mathcal V_n(\mathcal X)=-\frac{\kappa_n}{2N^n}\sum_{j_1, \cdots, j_{n}=1}^N\nabla_{x_i} \mathrm{Vol}_n(x_{j_1}, \cdots, x_{j_n}, x_i)^2,\quad t>0,\\
x_i(0)=x_i^0\in\bbr^d,\quad\forall~i\in[N].
\end{cases}
\end{align}
For newly proposed model \eqref{C-1}, explicit formula for $\mathrm{Vol}_n(x_{j_1}, \cdots, x_{j_{n+1}})$ is crucially required, and thanks to generalization of classical Heron's formula and the Cayley-Menger determinant \cite{So}, we can find the desired explicit formula. 
\begin{proposition}\label{P2.1}
For given $n+1$ points $\{x_1,\cdots,x_{n+1}\}$ in $\bbr^d$, let $B=(B_{ij})$ be an $(n+1)\times (n+1)$ matrix where each element $B_{ij}$ is given as $B_{ij}:=\|x_i-x_j\|^2$. Then, the $n$-dimensional volume for the $n$-simplex of $\{x_1,\cdots,x_{n+1}\}$ is given as 
\[
\mathrm{Vol}_n(x_1, \cdots, x_{n+1})^2=\frac{(-1)^{n+1}}{2^n(n!)^2}\mathrm{det}(\hat{B}),
\]
where  $\hat{B}$ is a matrix of size $(n+2)\times (n+2)$ obtained from $B$ by bordering $B$ with a top row $(0, 1, \cdots, 1)$ and a left column $(0, 1, \cdots, 1)^\top$. In other words,
\[
\hat{B} :=\begin{bmatrix}
0&\mathbf{1}_{n+1}^\top\\
\mathbf{1}_{n+1}&B
\end{bmatrix}, \quad \mathbf{1}_{n+1}:=(\underbrace{1, 1, \cdots, 1}_{(n+1)-\text{times}})^\top.
\]
\end{proposition}

Below, we introduce several examples for $\textup{Vol}_n$ with small numbers $n$. 
\begin{example}\label{E2.1}
(1) For $n=2$, $\textup{Vol}_2(x_1,x_2,x_3)$ is merely an area of the triangle whose vertex set is  $\{x_1,x_2,x_3\}$. By using the simplified notation $d_{ij}:= \|x_i - x_j\|$, Proposition \ref{P2.1} implies
\begin{align*}
\mathrm{Vol}_2(x_1, x_2, x_3)^2 &=-\frac{1}{16}\mathrm{det}\begin{pmatrix}
0&1&1&1\\
1&0&d_{12}^2&d_{13}^2\\
1&d_{12}^2&0&d_{23}^2\\
1&d_{13}^2&d_{23}^2&0
\end{pmatrix} \\
& =-\frac{1}{16}(-2d_{12}^2d_{23}^2-2d_{23}^2d_{31}^2-2d_{31}^2d_{12}^2+d_{12}^4+d_{23}^4+d_{31}^4),
\end{align*}
which coincides with the classical Heron's formula. \newline

\noindent(2) For $n=3$,the volume $\mathrm{Vol}_3(x_1, x_2, x_3, x_4)$ is the volume of tetrahedron whose vertex set is $\{x_1,x_2,x_3, x_4\}$, and Proposition \ref{P2.1} gives the following formula to $\mathrm{Vol}_3$:
\[
\mathrm{Vol}_3(x_1, x_2, x_3, x_4)^2=\frac{1}{288}\mathrm{det}\begin{pmatrix}
0&1&1&1&1\\
1&0&d_{12}^2&d_{13}^2&d_{14}^2\\
1&d_{12}^2&0&d_{23}^2&d_{24}^2\\
1&d_{13}^2&d_{23}^2&0&d_{34}^2\\
1&d_{14}^2&d_{24}^2&d_{34}^2&0
\end{pmatrix}.
\]
\end{example}
%
%

\section{Emergent behaviors}\label{sec:4}
\setcounter{equation}{0}
In this section, we study the emergent behaviors of systems \eqref{C-1}. As in the linear consensus model \eqref{B-1}, our model \eqref{C-1} also conserves the center of mass. 

\begin{proposition}[Conservation of center of mass]\label{P4.1}
Let $\mathcal{X}=\{x_i\}_{i=1}^N$ be a solution to system \eqref{C-1}  for some $n\geq1$. Then, the center of mass is conserved, i.e.,
\[
\frac1N \sum_{i=1}^Nx_i(t)=\frac1N \sum_{i=1}^Nx_i^0,\quad \text{or equivalently},\quad \bar{x}(t)=\bar{x}^0,\quad\forall~t\geq0.
\]
\end{proposition}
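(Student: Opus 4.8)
The plan is to exploit the translation invariance of the potential $\mathcal V_n$, which is an immediate consequence of Proposition \ref{P2.1}. Since $\mathrm{Vol}_n(x_1,\dots,x_{n+1})^2$ is expressed purely through the pairwise squared distances $B_{ij}=\|x_i-x_j\|^2$ via the Cayley--Menger determinant, each sub-volume depends on the configuration only through the differences $x_a-x_b$. Hence shifting every point by a common vector $c\in\bbr^d$ leaves every sub-volume unchanged, so that $\mathcal V_n(x_1+c,\dots,x_N+c)=\mathcal V_n(\mathcal X)$ for all $c$. I would work from the fully symmetric definition \eqref{C-0-1} rather than the form \eqref{C-1}, since in \eqref{C-1} the index $i$ has been singled out into the last slot and the gradient bookkeeping is cleaner in the symmetric version.

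The first step is to turn this invariance into a gradient identity at the level of a single simplex. Writing $G(y_1,\dots,y_{n+1}):=\mathrm{Vol}_n(y_1,\dots,y_{n+1})^2$ and differentiating the relation $G(y_1+c,\dots,y_{n+1}+c)=G(y_1,\dots,y_{n+1})$ in $c$ at $c=0$, I obtain that the slot-wise gradients sum to zero: $\sum_{a=1}^{n+1}(\partial_a G)(y_1,\dots,y_{n+1})=0$, where $\partial_a G$ is the gradient of $G$ in its $a$-th point argument.

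Next I would compute $\frac{d}{dt}\sum_{i=1}^N x_i(t)=\sum_{i=1}^N\dot x_i=-\sum_{i=1}^N\nabla_{x_i}\mathcal V_n(\mathcal X)$. Interchanging summations in \eqref{C-0-1}, for each fixed tuple $(j_1,\dots,j_{n+1})$ the contribution to $\nabla_{x_i}$ is $\nabla_{x_i}G(x_{j_1},\dots,x_{j_{n+1}})=\sum_{a:\,j_a=i}(\partial_a G)$, since only the slots holding the index $i$ contribute. Summing over all $i\in[N]$ then collapses this to $\sum_{a=1}^{n+1}(\partial_a G)$, which vanishes by the previous step. Therefore $\sum_{i=1}^N\nabla_{x_i}\mathcal V_n(\mathcal X)=0$, giving $\bar x(t)=\bar x^0$ for all $t\geq 0$.

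I expect the only delicate point to be the slot-counting when an index is repeated inside a tuple (for instance when some $j_a=j_b$). The translation-invariance route sidesteps any case analysis precisely because it sums over slots rather than over distinct points, and degenerate tuples cause no trouble since $G\equiv 0$ whenever two vertices coincide. I would therefore emphasize this slot formulation rather than attempting to differentiate the explicit determinant directly. As a one-line sanity check I would also note the general principle at work: the total gradient $\sum_i\nabla_{x_i}$ of any potential depending only on the pairwise differences $x_a-x_b$ necessarily vanishes, which is exactly what drives the conservation here.
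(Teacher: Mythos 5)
Your proof is correct and rests on exactly the same idea as the paper's: translation invariance of $\mathcal V_n$, differentiated along a uniform shift, forces $\sum_{i=1}^N\nabla_{x_i}\mathcal V_n(\mathcal X)=0$ and hence $\dot{\bar x}=0$. The only cosmetic difference is that you apply the invariance simplex-by-simplex (via the slot-wise identity for $G=\mathrm{Vol}_n^2$) and then sum over tuples, whereas the paper differentiates the invariance of the aggregate potential directly; the substance is identical.
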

\begin{proof}
Fix an arbitrary position vector $\xi \in \mathbb{R}^d$, and consider a parameterized curve $s\mapsto \mathcal X^s:=(x_1+s\xi,\cdots,x_N+s\xi)$ where $s\in \bbr$. Then, one can easily verify that the $n$-simplex potential $\mathcal V_n$ satisfies
\begin{equation}\label{4.8}
\mathcal V_n(\mathcal X^s)\equiv V_n(\mathcal X^0),\quad \forall~s\in \mathbb{R}.
\end{equation}
By differentiating \eqref{4.8} with respect to $s$, the chain rule yields $\sum_{i=1}^{N}\nabla_{x_i}\mathcal V_n(\mathcal X^0)\cdot \xi=0$. 	Since $\xi$ can be any vector in $\mathbb{R}^d$, one has 
\[\sum_{i=1}^{N}\nabla_{x_i}\mathcal V_n(\mathcal X^0)\equiv 0,\quad \forall~x_1,\cdots,x_N\in \mathbb{R}^d, \]
which gives the desired result.
\end{proof}

Next, we show that every relative distance between two points is non-increasing along flow \eqref{C-1}. 

\begin{lemma}\label{L4.1}
	Let $\{x_i\}_{i=1}^N$ be a solution to system \eqref{C-1} for some $n\geq1$. For every $i, j\in[N]$, we have 
	\[
	\frac{\dd}{\dt}\|x_i-x_j\|^2\leq0,\quad t>0.
	\]
\end{lemma}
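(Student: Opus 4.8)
The plan is to differentiate $\|x_i-x_j\|^2$ directly along the flow \eqref{C-1} and to rewrite the result as a sum of manifestly non-positive contributions, one per interacting simplex. Starting from
\[
\frac{\dd}{\dt}\|x_i-x_j\|^2=2(x_i-x_j)\cdot(\dot x_i-\dot x_j),
\]
and inserting \eqref{C-1}, everything reduces to understanding the single-simplex gradient $\nabla_{x_i}\mathrm{Vol}_n(x_{j_1},\cdots,x_{j_n},x_i)^2$. The first and decisive step is therefore a geometric identity for this gradient. Using the base--height factorization $\mathrm{Vol}_n=\frac1n\mathrm{Vol}_{n-1}(\text{base})\cdot h$, where the base is the $(n-1)$-simplex on $\{x_{j_1},\cdots,x_{j_n}\}$ and $h=\mathrm{dist}(x_i,A)$ is the distance from the apex $x_i$ to the affine hull $A$ of the base, together with $\nabla_x\,\mathrm{dist}(x,A)^2=2(x-P_Ax)$ (with $P_A$ the orthogonal projection onto $A$), I obtain
\[
\nabla_{x_i}\mathrm{Vol}_n(x_{j_1},\cdots,x_{j_n},x_i)^2=\frac{2}{n^2}\,\mathrm{Vol}_{n-1}(x_{j_1},\cdots,x_{j_n})^2\,(x_i-P_Ax_i).
\]
Since $\mathrm{Vol}_n(x_{j_1},\cdots,x_{j_n},x_i)$ vanishes identically as a function of $x_i$ whenever the base is affinely degenerate or whenever $x_i$ coincides with one of the $x_{j_k}$, this identity holds in every case, both sides being zero in the degenerate ones; in particular $\mathcal V_n$ is a polynomial, so no differentiability issue arises.

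Substituting this identity, the velocity becomes $\dot x_i=-\gamma\sum_{T}\mathrm{Vol}_{n-1}(x_{j_1},\cdots,x_{j_n})^2\,(x_i-P_{A_T}x_i)$ with $\gamma:=\frac{\kappa_n}{n^2N^n}\ge0$, where $T=(j_1,\cdots,j_n)$ ranges over the index tuples in $[N]^n$, $A_T$ is the affine hull of $\{x_{j_1},\cdots,x_{j_n}\}$, and every tuple in which $i$ occurs contributes zero. I then expand $(x_i-x_j)\cdot(\dot x_i-\dot x_j)$ and split the tuple sum according to whether the index set $\{j_1,\cdots,j_n\}$ contains $i$, $j$, both, or neither. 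Letting $Q_T$ denote the orthogonal projection onto the orthogonal complement of the direction space of $A_T$, the goal is to show that each surviving tuple contributes exactly $-\gamma\,\mathrm{Vol}_{n-1}(\cdots)^2\,\|Q_T(x_i-x_j)\|^2\le0$. For tuples avoiding both $i$ and $j$ this is immediate, since $(x_i-P_{A_T}x_i)-(x_j-P_{A_T}x_j)=Q_T(x_i-x_j)$ and the symmetry and idempotency of $Q_T$ give $(x_i-x_j)\cdot Q_T(x_i-x_j)=\|Q_T(x_i-x_j)\|^2$.

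The main obstacle is the asymmetry of the two velocity sums: a tuple containing $j$ but not $i$ enters $\dot x_i$ yet is absent from $\dot x_j$, and symmetrically for tuples containing $i$ but not $j$, so these \emph{mixed} terms have no partner to cancel against and must be treated on their own. The observation that resolves this is that if $j$ occurs in $T$ then $x_j\in A_T$, so $x_j$ may be chosen as the base point of $A_T$; then $x_i-P_{A_T}x_i=Q_T(x_i-x_j)$ and the mixed term again collapses to $-\gamma\,\mathrm{Vol}_{n-1}(\cdots)^2\|Q_T(x_i-x_j)\|^2$. The symmetric choice of $x_i$ as base point handles tuples containing $i$ but not $j$ (there the sign carried by the $\dot x_j$-sum combines with $x_j-P_{A_T}x_j=-Q_T(x_i-x_j)$ to yield the same non-positive value), while tuples containing both $i$ and $j$ drop out of both sums and in any case satisfy $Q_T(x_i-x_j)=0$. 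Summing the non-positive contributions over all tuples gives
\[
\frac{\dd}{\dt}\|x_i-x_j\|^2=-2\gamma\sum_{T}\mathrm{Vol}_{n-1}(x_{j_1},\cdots,x_{j_n})^2\,\|Q_T(x_i-x_j)\|^2\le0,
\]
which is the claim.
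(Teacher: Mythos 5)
Your proof is correct and follows essentially the same route as the paper's own argument (Appendix A, case $n\geq 3$): the base--height factorization $\mathrm{Vol}_n=\tfrac1n\mathrm{Vol}_{n-1}\cdot\mathrm{dist}$, the gradient identity $\nabla_{x_i}\mathrm{Vol}_n^2=\tfrac{2}{n^2}\mathrm{Vol}_{n-1}^2\,(x_i-P_A x_i)$, and the orthogonality argument are exactly the paper's steps, leading to the same final dissipation formula. The only differences are cosmetic: you treat all $n\geq1$ uniformly (the paper disposes of $n=1$ and $n=2$ by separate explicit computations), and your case split over tuples containing $i$, $j$, both, or neither is actually unnecessary, since the identity $(x_i-P_{A_T}x_i)-(x_j-P_{A_T}x_j)=Q_T(x_i-x_j)$ holds for every tuple regardless of membership, which is how the paper's single computation covers all cases at once.
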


\begin{proof}
Since the proof of this lemma is lengthy, we provide the proof in Appendix \ref{app:A}.
\end{proof}

We combine Proposition \ref{P4.1} and Lemma \ref{L4.1} to show that the distance toward center of mass from the each particle is non-increasing along system \eqref{C-1}.

\begin{lemma}\label{L4.2}
Let $\mathcal{X}=\{x_i\}_{i=1}^N$ be a solution to system \eqref{C-1} for some $n\geq1$ with the initial configuration $\{x_i^0\}_{i=1}^N$. Then, for every $i\in [N]$, we have  
\[
\frac{\dd}{\dt}\|x_i-\bar{x}^0\|^2\leq 0, \quad\text{where}\quad \bar x^0 = \frac1N \sum_{j=1}^N x_j^0.
\]
\end{lemma}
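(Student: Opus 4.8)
The plan is to reduce the claim to a statement about the instantaneous velocity and then bring in the two preceding results. Since $\bar{x}^0$ is a fixed vector, differentiation gives
\[
\frac{\dd}{\dt}\|x_i-\bar{x}^0\|^2 = 2(x_i-\bar{x}^0)\cdot\dot{x}_i,
\]
so it suffices to show $(x_i-\bar{x}^0)\cdot\dot{x}_i\le 0$ for each $i$. By Proposition~\ref{P4.1} the center of mass never moves, hence at every time $\bar{x}^0=\frac1N\sum_{j=1}^N x_j(t)$, and therefore $x_i-\bar{x}^0=\frac1N\sum_{j=1}^N(x_i-x_j)$. The target inequality thus becomes $\sum_{j=1}^N (x_i-x_j)\cdot\dot{x}_i\le 0$.

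The natural first attempt is to pass to pairwise squared distances, on which Lemma~\ref{L4.1} acts directly. Using the parallel-axis identity $\sum_{k}\|x_i-x_k\|^2 = N\|x_i-\bar{x}^0\|^2+\sum_k\|x_k-\bar{x}^0\|^2$ (valid because $\bar{x}^0$ is the instantaneous barycenter) and differentiating, one obtains $N\frac{\dd}{\dt}\|x_i-\bar{x}^0\|^2 = \sum_k\frac{\dd}{\dt}\|x_i-x_k\|^2 - \frac{\dd}{\dt}\sum_k\|x_k-\bar{x}^0\|^2$. Lemma~\ref{L4.1} makes the first sum non-positive, but the remaining term $-\frac{\dd}{\dt}\sum_k\|x_k-\bar{x}^0\|^2$ has the \emph{wrong} sign, since the total variance $\sum_k\|x_k-\bar{x}^0\|^2$ is itself non-increasing. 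So a termwise application of Lemma~\ref{L4.1} is inconclusive: monotonicity of every pairwise distance together with conservation of the barycenter does not by itself force each $\|x_i-\bar{x}^0\|$ to decrease, and the specific gradient structure of \eqref{C-1} must be used.

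To close this gap I would return to the explicit form of the driving field. From the base-times-height factorization of the volume (consistent with Proposition~\ref{P2.1}), the gradient of a single squared volume with respect to its apex is
\[
\nabla_{x_i}\mathrm{Vol}_n(x_{j_1},\cdots,x_{j_n},x_i)^2 = \frac{2}{n^2}\,\mathrm{Vol}_{n-1}(x_{j_1},\cdots,x_{j_n})^2\,(x_i-\pi_i),
\]
where $\pi_i$ is the orthogonal projection of $x_i$ onto the affine hull of $\{x_{j_1},\cdots,x_{j_n}\}$. Thus $\dot{x}_i$ is a non-negative combination of the inward vectors $-(x_i-\pi_i)$, and $(x_i-\bar{x}^0)\cdot\dot{x}_i\le 0$ reduces to showing that $\sum_{j_1,\cdots,j_n}\mathrm{Vol}_{n-1}(x_{j_1},\cdots,x_{j_n})^2\,(x_i-\bar{x}^0)\cdot(x_i-\pi_i)$ is non-negative. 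I expect this to be the main obstacle: for an individual tuple the inner product may be negative, because the foot $\pi_i$ need not lie on the same side as $\bar{x}^0$. One must exploit the full symmetry of the unrestricted sum over all $(j_1,\cdots,j_n)$, writing $x_i-\pi_i$ in barycentric form $\sum_b\lambda_b(x_i-x_{j_b})$ and pairing against the estimates of Lemma~\ref{L4.1}, to cancel the bad contributions. This symmetrization is the same sort of computation performed for Lemma~\ref{L4.1} in Appendix~\ref{app:A}, and I would anticipate reusing that machinery.
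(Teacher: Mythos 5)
Your setup is sound and follows the paper's own route up to the decisive step: you correctly discard the parallel-axis/Lemma~\ref{L4.1} attempt (and correctly explain why it is inconclusive), you invoke the projection identity \eqref{4.10}, namely $\nabla_{x_i}\mathrm{Vol}_n(x_{j_1},\cdots,x_{j_n},x_i)^2=\tfrac{2}{n^2}\mathrm{Vol}_{n-1}(x_{j_1},\cdots,x_{j_n})^2\,(x_i-H^i_{j_1j_2\cdots j_n})$ (your $\pi_i$ is the paper's $H^i_{j_1j_2\cdots j_n}$), and you reduce the claim to the non-negativity of $\sum_{j_1,\cdots,j_n}\mathrm{Vol}_{n-1}^2\,\langle x_i-\bar x,\,x_i-H^i_{j_1j_2\cdots j_n}\rangle$. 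But at exactly that point the argument stops being a proof: you only conjecture that a symmetrization over tuples, a barycentric expansion of $x_i-\pi_i$, and ``pairing against Lemma~\ref{L4.1}'' will cancel the possibly negative terms. That is a genuine gap, and moreover the sketched route is not how the cancellation works --- the paper's proof of this lemma never uses Lemma~\ref{L4.1}, and no cancellation \emph{between} tuples occurs at all.

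The actual closing step (Appendix~\ref{app:B}) is a per-tuple completion of squares enabled by conservation of the center of mass. Since $\dot{\bar x}=0$ by Proposition~\ref{P4.1}, one may replace $\dot x_i$ by $\dot x_i-\frac1N\sum_k\dot x_k$; pushing the projection identity \eqref{4.10} through this difference inserts, inside each tuple's term, the subtraction of the tuple average $\frac1N\sum_{k}(x_k-H^k_{j_1j_2\cdots j_n})$:
\[
\frac{\dd}{\dt}\|x_i-\bar x\|^2=-\frac{2\kappa_n}{N^n n^2}\sum_{j_1,\cdots,j_n=1}^N\mathrm{Vol}_{n-1}(x_{j_1},\cdots,x_{j_n})^2\,\Big\langle x_i-\bar x,\;\big(x_i-H^i_{j_1j_2\cdots j_n}\big)-\frac1N\sum_{k=1}^N\big(x_k-H^k_{j_1j_2\cdots j_n}\big)\Big\rangle.
\]
Now orthogonality finishes each term separately: every $x_\ell-H^\ell_{j_1j_2\cdots j_n}$ is orthogonal to $Q_{j_1j_2\cdots j_n}$ by \eqref{4.9}, while $H^i_{j_1j_2\cdots j_n}-\frac1N\sum_k H^k_{j_1j_2\cdots j_n}$ lies \emph{in} $Q_{j_1j_2\cdots j_n}$, so the $Q$-component of $x_i-\bar x$ drops out of the inner product and each term equals $\big\|(x_i-H^i_{j_1j_2\cdots j_n})-\frac1N\sum_k(x_k-H^k_{j_1j_2\cdots j_n})\big\|^2\ge 0$. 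Your observation that the raw per-tuple quantity $\langle x_i-\bar x,\,x_i-H^i_{j_1j_2\cdots j_n}\rangle$ can be negative is precisely why this (cost-free, since the subtracted terms sum to a multiple of $\dot{\bar x}=0$) re-centering is the crux; without identifying it, the proof cannot be completed along the lines you propose.
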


\begin{proof}
The proof of this lemma is introduced in Appendix \ref{app:B}.
\end{proof}

It follows from Lemma \ref{L4.2} that $x_i(t)$ is uniformly bounded in time. More precisely, $x_i$ belongs to the ball centered at $\bar x^0$ with radius $R:=  \max_{k\in [N]}\|x_k^0-\bar{x}^0\|$, i.e., 
\[
 x_i(t) \subseteq K:=B\left(\bar{x}^0, R\right),\quad i\in [N],\quad t>0.
\]
This implies $\{x_i(t)\}_{i=1}^N$ does not escape the compact set $K$. Since system \eqref{C-1} is a gradient  system on a compact set,  there exists $x_i^\infty$ for each $i\in[N]$ such that
\[
\lim_{t\to\infty} x_i(t)=x_i^\infty.
\]

Using the convergence of $\{x_i(t)\}_{i=1}^N$ to an equilibrium of system \eqref{C-1}, we have the following theorem.

\begin{theorem}[Emergent behavior]\label{T4.1}
Let $\{x_i\}_{i=1}^N$ be a solution to system \eqref{C-1} for some $n\geq1$. Then, there exists $P^\infty\in \mathcal{A}_d^{n-1}$ such that 
\[\lim_{t\to\infty } x_i(t)=:x_i^\infty\in P^\infty,\quad \forall~i\in [N]. \]
In other words, there exists an $(n-1)$-dimensional affine subspace $P^\infty$ and all particles converge to points on $P^\infty$.
\end{theorem}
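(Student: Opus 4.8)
The plan is to reduce everything to an analysis of the limiting configuration $\mathcal{X}^\infty=(x_1^\infty,\dots,x_N^\infty)$, whose existence is already guaranteed by the compactness argument preceding the statement (Lemma \ref{L4.2} confines the flow to $K$ and the gradient structure gives $x_i(t)\to x_i^\infty$). First I would note that along \eqref{C-1} one has $\frac{\dd}{\dt}\mathcal{V}_n(\mathcal{X}(t))=-\sum_{i=1}^N\|\nabla_{x_i}\mathcal{V}_n(\mathcal{X}(t))\|^2\le 0$, and since $\mathcal{V}_n\ge 0$ is bounded below, integrating in time gives $\int_0^\infty\sum_i\|\nabla_{x_i}\mathcal{V}_n(\mathcal{X}(t))\|^2\,\dd t<\infty$. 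Hence $\nabla\mathcal{V}_n(\mathcal{X}(t_k))\to 0$ along some $t_k\to\infty$, and by continuity of $\nabla\mathcal{V}_n$ together with $\mathcal{X}(t_k)\to\mathcal{X}^\infty$ the limit is an equilibrium:
\[
\nabla_{x_i}\mathcal{V}_n(\mathcal{X}^\infty)=0,\qquad \forall~i\in[N].
\]
It then suffices to show that every such equilibrium has vanishing potential and to translate $\mathcal{V}_n(\mathcal{X}^\infty)=0$ into the claimed geometric statement.

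The main tool I would use is a homogeneity (Euler) identity. Because $\mathrm{Vol}_n$ is an $n$-dimensional volume, it scales as $\lambda^{n}$ under the dilation $x_i\mapsto \bar{x}+\lambda(x_i-\bar{x})$ about the (conserved, by Proposition \ref{P4.1}) center of mass $\bar{x}$; consequently $\mathrm{Vol}_n^2$ and therefore $\mathcal{V}_n$ in \eqref{C-0-1} are positively homogeneous of degree $2n$ under this dilation. Differentiating the relation $\mathcal{V}_n(\{\bar{x}+\lambda(x_i-\bar{x})\}_i)=\lambda^{2n}\mathcal{V}_n(\mathcal{X})$ in $\lambda$ and evaluating at $\lambda=1$ yields
\[
\sum_{i=1}^N\big\langle \nabla_{x_i}\mathcal{V}_n(\mathcal{X}),\,x_i-\bar{x}\big\rangle=2n\,\mathcal{V}_n(\mathcal{X}).
\]
The translation invariance of $\mathcal{V}_n$ (the same property used in Proposition \ref{P4.1}) legitimizes centering the dilation at $\bar x$. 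Applying this identity at $\mathcal{X}^\infty$, the left-hand side vanishes since $\mathcal{X}^\infty$ is an equilibrium, and because $n\ge 1$ we deduce $\mathcal{V}_n(\mathcal{X}^\infty)=0$.

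Finally I would convert $\mathcal{V}_n(\mathcal{X}^\infty)=0$ into the affine-subspace conclusion. Since $\mathcal{V}_n$ is a non-negative sum of the squared volumes $\mathrm{Vol}_n(x_{j_1}^\infty,\dots,x_{j_{n+1}}^\infty)^2$, its vanishing forces every $n$-simplex built from the limit points to be degenerate; equivalently, by the Cayley--Menger formula of Proposition \ref{P2.1}, any $n+1$ of the points $\{x_i^\infty\}$ are affinely dependent. Therefore the affine hull of $\{x_1^\infty,\dots,x_N^\infty\}$ has dimension at most $n-1$, so it is contained in some $P^\infty\in\mathcal{A}_d^{\,n-1}$, which is precisely the assertion.

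I expect the principal obstacle to be the rigorous justification that the attained limit is a genuine critical point of $\mathcal{V}_n$ (the $\omega$-limit argument above), and the clean verification of the homogeneity identity—here the degree-$2n$ scaling is most transparent from the geometric meaning of $\mathrm{Vol}_n$ rather than from the determinant in Proposition \ref{P2.1}. A minor point to address is the degenerate case in which the limiting affine hull has dimension strictly less than $n-1$ (for instance total collapse to a point); in that case any $(n-1)$-dimensional affine subspace containing the hull can be taken as $P^\infty$, so the conclusion $P^\infty\in\mathcal{A}_d^{\,n-1}$ still holds.
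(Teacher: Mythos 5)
Your proof is correct, and it reaches the conclusion by a genuinely different route than the paper. The paper's proof (Appendix \ref{app:C}) also starts from the pre-established convergence $x_i(t)\to x_i^\infty$ to an equilibrium, but it then characterizes equilibria by recycling the dissipation computation behind Lemma \ref{L4.1}: at an equilibrium every term $\mathrm{Vol}_{n-1}(x_{j_1}^\infty,\dots,x_{j_n}^\infty)^2\bigl\|(x_i^\infty-H^{i,\infty}_{j_1\cdots j_n})-(x_k^\infty-H^{k,\infty}_{j_1\cdots j_n})\bigr\|^2$ must vanish, and a case split follows --- if some $\mathrm{Vol}_{n-1}(x_{j_1}^\infty,\dots,x_{j_n}^\infty)\neq 0$, then every $x_i^\infty$ lies in the $(n-1)$-dimensional affine hull $P_{j_1\cdots j_n}$ of those $n$ points, while if all $(n-1)$-volumes vanish, the limit points span an affine set of dimension at most $n-2$. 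You instead show that \emph{every} critical point of $\mathcal{V}_n$ is a zero of $\mathcal{V}_n$ via the Euler identity for the degree-$2n$ dilation homogeneity (which is indeed valid: geometrically, or because the Cayley--Menger determinant of Proposition \ref{P2.1} is homogeneous of degree $n$ in the squared distances), and then pass from degeneracy of all $n$-simplices to $\dim\,\mathrm{aff}\{x_1^\infty,\dots,x_N^\infty\}\le n-1$; both steps are sound. Your route buys three things: it avoids the projection machinery $H^i_{j_1\cdots j_n}$ entirely; it carefully justifies, via the integrated dissipation and continuity of $\nabla\mathcal{V}_n$, why the limit is a critical point, which the paper merely asserts; and it yields as a byproduct the classification of equilibria as the zero set of $\mathcal{V}_n$ recorded in Remark \ref{R4.2}. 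The paper's route, in exchange, identifies $P^\infty$ concretely as the affine hull of $n$ specific limit points in the non-degenerate case, and its per-tuple equilibrium identity is the form that transfers directly to the reduced model of Theorem \ref{T5.1}, where only simplices in $\mathcal{S}$ appear. One minor point in your favor: you explicitly handle the degenerate case where the limiting affine hull has dimension strictly below $n-1$ by enlarging it to some $P^\infty\in\mathcal{A}_d^{n-1}$, a step the paper's case (ii) glosses over.
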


\begin{proof}
The proof of this theorem is introduced in Appendix \ref{app:C}.
\end{proof}

\begin{remark}\label{R4.2}
(1) If the set of initial data $\mathcal{X}^0:=\{x_i^0\}_{i=1}^N$ lies on an $(n-1)$-dimensional affine subspace $P$, then $\mathcal{X}^0$  is an equilibrium solution of system \eqref{C-1}. Together with Theorem \ref{T4.1}, we verify that  the following set becomes an equilibrium for \eqref{C-1}
\[
\mathcal{E}_n:=\left\{\{x_i\}_{i=1}^N: \{x_i\}_{i=1}^N\subset P,\quad P\text{ is an $(n-1)$-dimensional affine subspace}\right\}.
\]
Furthermore, we observe
\[
V_n(\mathcal{X})=0\quad \Longleftrightarrow\quad \mathcal{X}\in\mathcal{E}_n.
\]
Thus, we classify all equilibria for \eqref{C-1} and this implies that  if $\mathcal{X}=\{x_i\}_{i=1}^N$ is a solution to system \eqref{C-1}, then $V_n(\mathcal{X}(t))$ is a decreasing function and converges to zero. \\

\noindent(2) These $\mathcal{E}_n$ are  defined only for $1\leq n\leq d+1$ with the following hierarchy:
\[
\mathcal{E}_1\subsetneq \mathcal{E}_2\subsetneq\cdots \subsetneq \mathcal{E}_{d+1}=\bbr^d.
\]
\end{remark}

Now, we perform numerical simulation for \eqref{C-1} to support and visualize our theoretical results. For numerical implementation, we employ the fourth-order Runge--Kutta method and use the following system parameters:
\[
\Delta t = 10^{-3}, \quad 2\leq n\leq 3, \quad N=40.
\]
Here,  $\circ$ and $*$ marks represent the initial state and the final state, respectively.

\begin{figure}[h] 
\includegraphics[width=7cm]{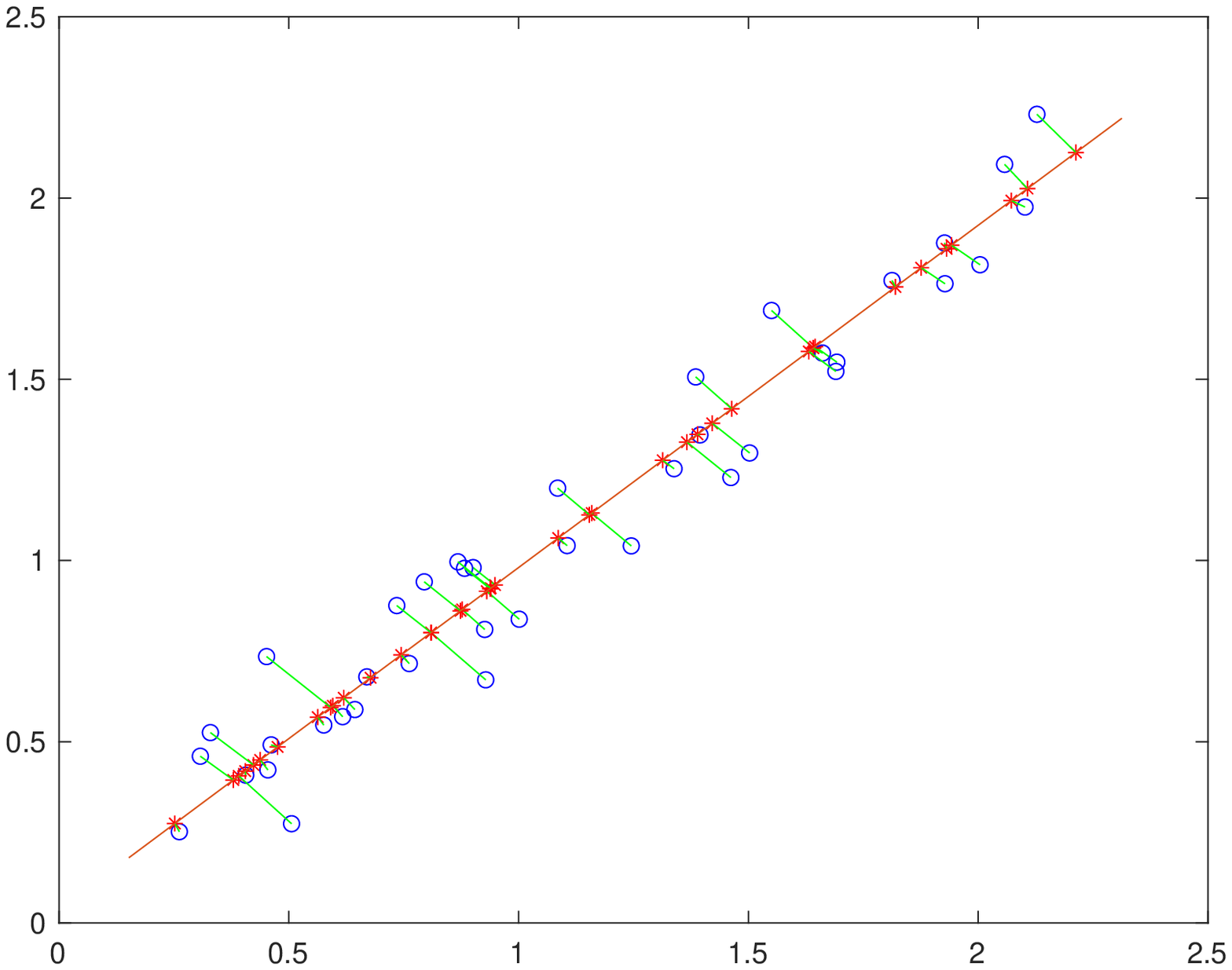}\quad
\includegraphics[width=7cm]{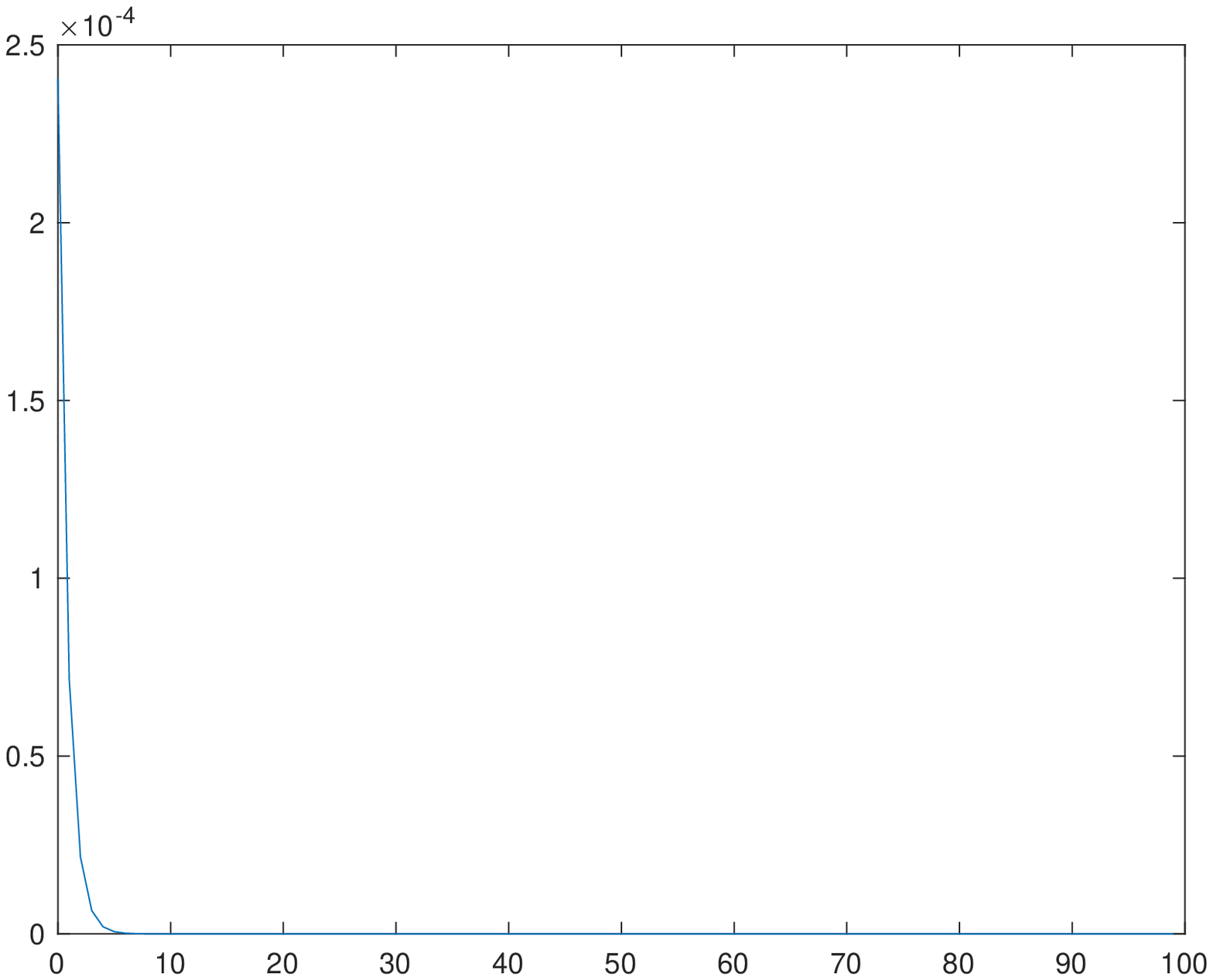}
\caption{If particles $\{x_i\}_{i=1}^N$ follows system \eqref{C-1} with $n=2$, then the particles are aligned on the same line(Left figure). The right graph plots the time evolution of the average area of triangles $x_ix_jx_k$. In this simulation, we choose the initial data as the perturbed points from the line. }\label{Fig1}
\end{figure}

In Figure \ref{Fig1}, we consider the case of $(d,n)=2$ and initial data randomly chosen from (small) perturbation of a given line colored in red. The left figure shows that all particles tend to align with the same line. For the right figure, we plot the temporal evolution of the averaged area of triangles with vertices $x_i,x_j,x_k$. As expected from theoretical results, we show that area of triangles converge to zero.

\begin{figure}[h] 
\includegraphics[width=7cm]{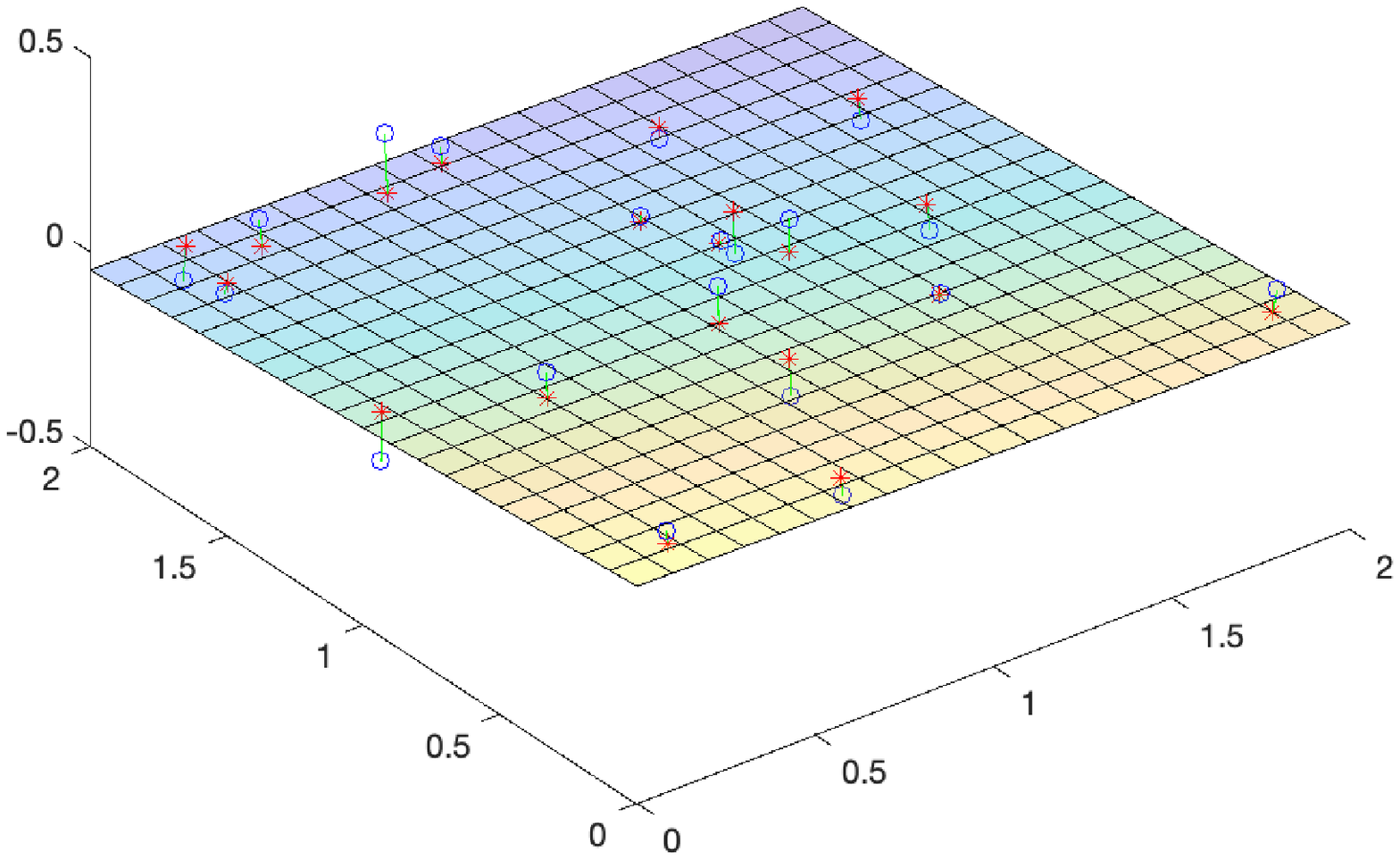}\quad
\includegraphics[width=7cm]{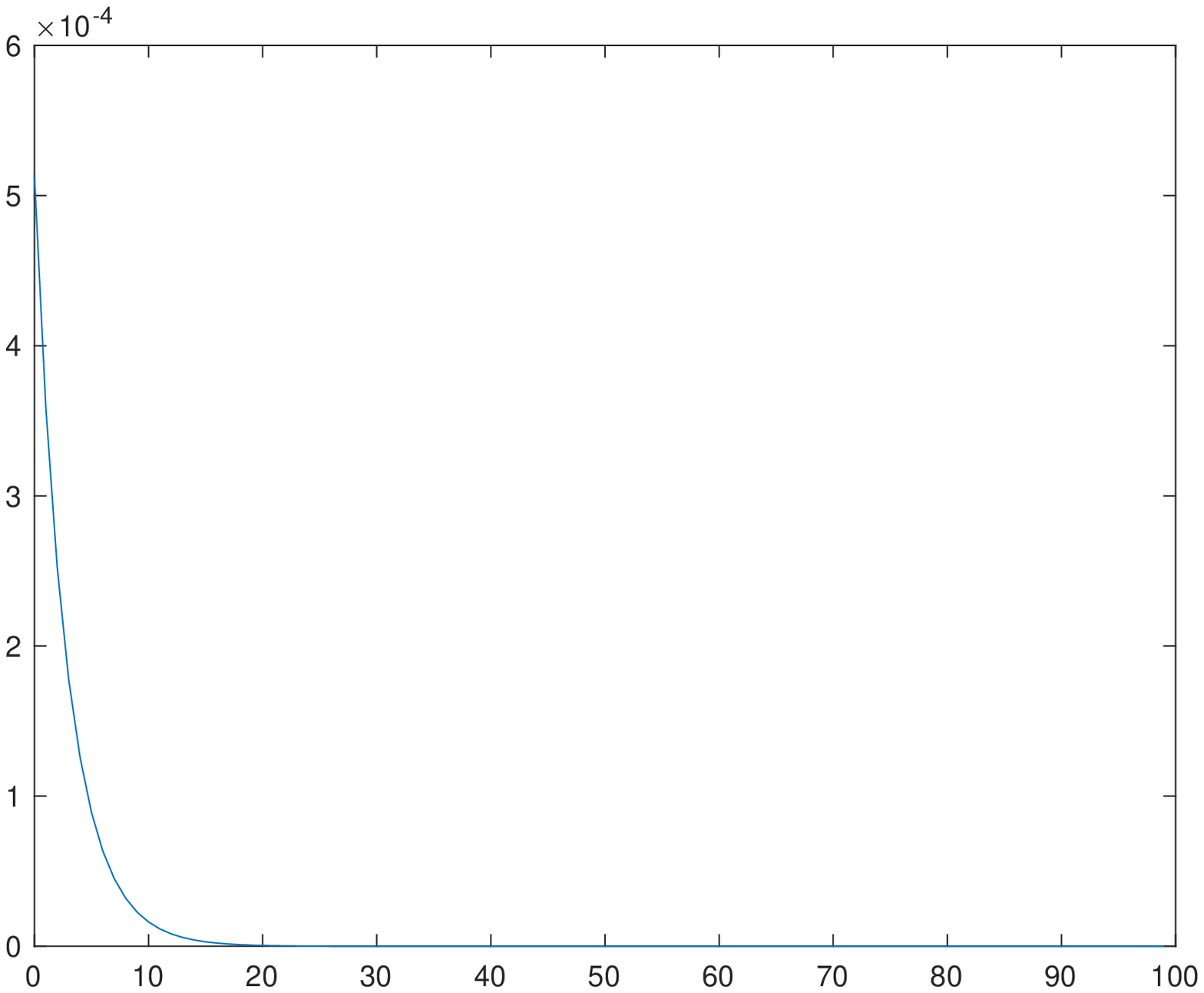}
\caption{If particles $\{x_i\}_{i=1}^N$ follows system \eqref{C-1} with $n=3$, then the particles are aligned on the same plane(Left figure). The right graph plots the time evolution of the average volume of tetrahedrons $x_ix_jx_kx_\ell$. In this simulation, we choose the initial data as the perturbed points from the plane.}\label{Fig2}
\end{figure}

In Figure \ref{Fig2}, we move on to the case of $(d,n)=(3,3)$ and choose initial data as randomly perturbed from  a given plane. The left figure demonstrates that all particles eventually approach to the plane. For the right figure, we plot a temporal evolution of the averaged volume of tetrahedrons with vertices $x_i,x_j,x_k,x_\ell$. We were able to numerically verify that the volume tends to zero as time goes to infinity, as our analytical proof guaranteed. \newline

However, the computational cost of solving system \eqref{C-1} is of order
\begin{equation} \label{C-30}
O(N^ndI),
\end{equation} 
where $N$ is the number of particles in $\bbr^d$, $d$ is the dimension of the space, and $I$ is the number of iterations. For $n\geq2$, the total computational cost  increased rapidly for larger $N\gg n$. Thus, we will provide the reduced model which exhibits similar asymptotic behavior to reduce the computational cost.

\section{Reduced model}\label{sec:5}
\setcounter{equation}{0}

In this section, we provide an algorithm which reduces its computational cost by considering a sparse set of given $n$-simplices. Recall functional \eqref{C-0-1}: 
\[
\mathcal V_n(\mathcal X)=\frac{\kappa_n}{2(n+1)N^{n}}\sum_{j_1, \cdots, j_{n+1}=1}^N \mathrm{Vol}_n(x_{j_1}, \cdots, x_{j_{n+1}})^2.
\]
In this sum, we considered all volumes of $n$-simplices made of $n+1$ vectors in $\{x_1, \cdots, x_N\}$. This is why the computational cost is proportional to $N^n$. Instead of considering all volumes of $n$-simplices, we now consider fewer $n$-simplices. This idea was introduced in \cite{DHJ, DHK, JLL} as the random batch method. In these work, the network topology is changed along the time evolution, however, in this section will only consider that the network topology is constant along the time evolution.
We denote the reduced potential of \eqref{C-0-1} by
\begin{align}\label{E-1}
\mathcal V_n^R(\mathcal X) :=\frac{\kappa_n}{2(n+1)|\mathcal{S}|}\sum_{(j_1, \cdots, j_{n+1})\in \mathcal{S}} \mathrm{Vol}_n(x_{j_1}, \cdots, x_{j_{n+1}})^2,
\end{align}
where $\mathcal{S}\in [N]^{n+1}$ is the set of ordered $(n+1)$-tuple which consists of the simplices used to construct the reduced potential and $|\mathcal{S}|$ is the number of simplices in $\mathcal{S}$. To make the model symmetric, we assume that
\[
(j_1, \cdots,j_{n+1})\in \mathcal{S}\quad\Longleftrightarrow\quad(j_{\sigma(1)}, \cdots, j_{\sigma(n+1)})\in \mathcal{S}
\]
for all permutation $\sigma:\{1, 2, \cdots, n+1\}\to\{1, 2, \cdots, n+1\}$. We also define sets $\{\mathcal{S}_i\}_{i=1}^N$ as follows:
\[
\mathcal{S}_i:=\{(j_1, \cdots, j_n)\in [N]^n: (j_1, \cdots, j_n, i)\in \mathcal{S}\}.
\]
If $\mathcal{S}_i$ is an empty set, then the dynamics of $\{x_1,\cdots,x_N\}$ governed by the potential \eqref{E-1} satisfies  $\dot{x}_i=0$. To prevent this issue, we assume that $\mathcal{S}_i\neq \emptyset$ for all $i\in [N]$. Obviously, if the potential \eqref{C-0-1} is replaced by \eqref{E-1}, then we get the following reduced system:
\begin{align}\label{E-2}
\begin{cases}
\displaystyle\dot{x}_i=-\nabla_{x_i}\mathcal V_n^R(\mathcal X)=-\frac{\kappa_n}{2|\mathcal{S}_i|}\sum_{(j_1, \cdots, j_{n})\in\mathcal{S}_i}^N\nabla_{x_i} \mathrm{Vol}_n(x_{j_1}, \cdots, x_{j_n}, x_i)^2,\quad t>0,\\
x_i(0)=x_i^0\in\bbr^d,\quad\forall~i\in[N].
\end{cases}
\end{align}
Then, the  computational cost of our new model becomes 
\begin{align}\label{E-2-1}
O(|\mathcal{S}|dI),
\end{align}
where $N^n$ in \eqref{C-30} is reduced to $|\mathcal S|$ by choosing few simplices. The only difference between \eqref{E-2} and \eqref{C-1} lies on the number in denominator that is changed from $N^n$ to $|\mathcal S|$.

 We now discuss how much new system \eqref{E-2} reduces the amount computations to run compared to \eqref{C-1}. Before considering $n\geq3$, we start with $n=2$. For $x, y, z\in\bbr^d$,
\[
\mathrm{Vol}_2(x, y, z)=0\quad\Longleftrightarrow\quad \{x, y, z\}\text{ are collinear}.
\]
Hence if we have two collinearities  of three points among four points, i.e.,
\[
\mathrm{Vol}_2(x_1, x_2, x_3)=0\quad\text{ and }\quad\mathrm{Vol}_2(x_2, x_3, x_4)=0,
\]
one can deduce that $\{x_1, x_2, x_3, x_4\}$ is also collinear. In other words, 
\[
\mathrm{Vol}_2(x_1, x_2, x_3)^2+\mathrm{Vol}_2(x_2, x_3, x_4)^2=0\quad\Longrightarrow\quad \{x_1,x_2, x_3, x_4\}\text{ are collinear}.
\]
This indicates that even if we choose two simplices among all possible cases (in fact, $\binom{4}{3}=4$ cases) in the total potential, we are able to obtain a convergence of $\{x_1, x_2, x_3, x_4\}$ to the same line. By using a similar argument to Theorem \ref{T4.1}, the following theorem can be obtained. 

\begin{theorem}\label{T5.1}
Let $\{x_i\}_{i=1}^N$ be a solution to system \eqref{E-2}. Then, there exists $x_i^\infty$ such that
\[
\lim_{t\to\infty}x_i(t)=x_i^\infty,\quad\forall~i\in[N].
\]
Furthermore, if $(j_1, \cdots, j_{n+1})\in \mathcal{S}$, then there exists $P^\infty\subset \mathcal{A}^{n-1}_d$ such that $\{x_{j_1}^\infty, \cdots, x_{j_{n+1}}^\infty\}\subset P^\infty$.
\end{theorem}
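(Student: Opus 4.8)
The plan is to mirror the proof of Theorem~\ref{T4.1}, but to replace the distance--monotonicity lemmas (Lemma~\ref{L4.1} and Lemma~\ref{L4.2})---whose proofs exploit the fully symmetric sum and need not survive an arbitrary network topology $\mathcal{S}$---by a single homogeneity identity that holds for every choice of $\mathcal{S}$. First I would record two structural facts about the reduced potential $\mathcal V_n^R$ in \eqref{E-1}. Since $\mathrm{Vol}_n$ is invariant under translating all of its arguments by a common vector, $\mathcal V_n^R$ is translation invariant, and the argument of Proposition~\ref{P4.1} applies verbatim to give $\sum_{i=1}^N \nabla_{x_i}\mathcal V_n^R \equiv 0$, i.e.\ conservation of the center of mass $\bar{x}^0$. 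Moreover, scaling every vertex about the origin by $\lambda$ scales each $n$-simplex, hence each $\mathrm{Vol}_n$ by $\lambda^n$ and each $\mathrm{Vol}_n^2$ by $\lambda^{2n}$; thus $\mathcal V_n^R$ is homogeneous of degree $2n$, and Euler's identity yields
\[
\sum_{i=1}^N x_i\cdot\nabla_{x_i}\mathcal V_n^R(\mathcal X)=2n\,\mathcal V_n^R(\mathcal X).
\]

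Next I would combine this with the gradient structure $\dot x_i=-\nabla_{x_i}\mathcal V_n^R$. Differentiating $\frac12\sum_i\|x_i-\bar{x}^0\|^2$ along the flow and using conservation of the center of mass together with the Euler identity (the cross term $\bar{x}^0\cdot\sum_i\nabla_{x_i}\mathcal V_n^R$ vanishes) gives
\[
\frac{\dd}{\dt}\,\frac12\sum_{i=1}^N\|x_i-\bar{x}^0\|^2=-\sum_{i=1}^N(x_i-\bar{x}^0)\cdot\nabla_{x_i}\mathcal V_n^R=-2n\,\mathcal V_n^R(\mathcal X)\le 0.
\]
Since the left-hand quantity is bounded below, the trajectory stays in a fixed compact ball about $\bar{x}^0$ (the analogue of Lemma~\ref{L4.2}, now valid for every topology $\mathcal{S}$), and integrating in time shows $\int_0^\infty \mathcal V_n^R(\mathcal X(s))\,\dd s<\infty$. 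On the other hand $\mathcal V_n^R$ is non-increasing along the flow because $\frac{\dd}{\dt}\mathcal V_n^R=-\|\nabla\mathcal V_n^R\|^2\le 0$. A non-negative, non-increasing, integrable function on $[0,\infty)$ must tend to $0$, so $\mathcal V_n^R(\mathcal X(t))\to 0$.

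For the convergence of the individual trajectories I would argue exactly as in Theorem~\ref{T4.1}: \eqref{E-2} is a gradient system for the real-analytic (indeed polynomial) potential $\mathcal V_n^R$, and the trajectory is confined to a compact set, so the {\L}ojasiewicz gradient inequality---or the gradient-system-on-a-compact-set convergence already invoked for Theorem~\ref{T4.1}---guarantees that $x_i(t)$ converges to a single limit $x_i^\infty$ for each $i$. Passing to the limit and using continuity of $\mathcal V_n^R$ then gives $\mathcal V_n^R(\mathcal X^\infty)=0$. Because $\mathcal V_n^R$ is a sum of the non-negative terms $\mathrm{Vol}_n(x_{j_1},\dots,x_{j_{n+1}})^2$ over $(j_1,\dots,j_{n+1})\in\mathcal{S}$, each such term must vanish; that is, $\mathrm{Vol}_n(x_{j_1}^\infty,\dots,x_{j_{n+1}}^\infty)=0$ for every simplex in $\mathcal{S}$. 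Vanishing $n$-volume of $n+1$ points is equivalent to their affine dependence, so for each $(j_1,\dots,j_{n+1})\in\mathcal{S}$ there is an $(n-1)$-dimensional affine subspace $P^\infty\in\mathcal A_d^{n-1}$ containing $\{x_{j_1}^\infty,\dots,x_{j_{n+1}}^\infty\}$, which is the claim.

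The step I expect to require the most care is the upgrade from $\omega$-limit information to convergence of each $x_i(t)$ to a single point: boundedness and a gradient structure alone permit trajectories whose $\omega$-limit set is a nontrivial continuum, so one genuinely needs analyticity of $\mathcal V_n^R$ (via {\L}ojasiewicz) or the corresponding argument already used for Theorem~\ref{T4.1}. By contrast, the geometric conclusion is comparatively soft here: once $\mathcal V_n^R\to 0$ is in hand, the per-simplex coplanarity drops out from non-negativity of the summands. One subtlety to respect is that, unlike the full model of Theorem~\ref{T4.1} where a single common subspace appears, here one should only claim a possibly simplex-dependent $P^\infty$ for each tuple in $\mathcal{S}$, exactly as the statement is phrased; a single common subspace would require $\mathcal{S}$ to be connected in the collinearity-propagation sense discussed before the statement.
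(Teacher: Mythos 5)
Your argument is correct in substance, and it takes a genuinely different route from the paper's. The paper disposes of Theorem \ref{T5.1} with the single sentence that ``a similar argument to Theorem \ref{T4.1}'' applies, i.e.\ it implicitly re-runs Lemmas \ref{L4.1}--\ref{L4.2} for boundedness, then gradient-system convergence, then the equilibrium case analysis of Appendix C. You correctly identified the weak point of that plan: the proofs of Lemmas \ref{L4.1}--\ref{L4.2} pair the simplices acting on $x_i$ with those acting on $x_k$, which requires $\mathcal{S}_i=\mathcal{S}_k$, and the conclusions genuinely fail for sparse topologies (already for $n=1$ on a path graph, a pairwise distance can increase). Your substitute --- translation invariance giving $\sum_i\nabla_{x_i}\mathcal V_n^R\equiv 0$, homogeneity of degree $2n$ giving Euler's identity, and the two combining to make $\sum_i\|x_i-\bar x^0\|^2$ a Lyapunov function for \emph{every} $\mathcal{S}$ --- is therefore not a stylistic variant but a repair of the step the paper glosses over. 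Your route also bypasses the equilibrium classification entirely: $\mathcal V_n^R\to 0$ follows from integrability (or even more directly from Euler's identity evaluated at the limiting critical point, since critical points of a homogeneous function lie in its zero set), and the per-simplex coplanarity then drops out of non-negativity of the summands. Your explicit appeal to {\L}ojasiewicz is also more careful than the paper's bare ``gradient system on a compact set'' claim, which needs analyticity to be valid at all.

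One caveat you should state explicitly: you take \eqref{E-2} literally as $\dot x_i=-\nabla_{x_i}\mathcal V_n^R$, but the right-hand expression of \eqref{E-2}, normalized by $1/|\mathcal{S}_i|$, is \emph{not} the gradient of \eqref{E-1}; the true gradient carries the global factor $1/|\mathcal{S}|$, so the two sides of \eqref{E-2} differ by the particle-dependent factor $|\mathcal{S}|/|\mathcal{S}_i|$ (for the full model this factor equals $N$ for every $i$, which is why \eqref{C-1} is consistent). Under the $1/|\mathcal{S}_i|$ normalization the unweighted center of mass is in general \emph{not} conserved, so your displayed identity must be run in weighted form: $\sum_i|\mathcal{S}_i|\,x_i$ is conserved (apply translation invariance simplex by simplex), and for any fixed $a$ one gets $\frac{\mathrm{d}}{\mathrm{d}t}\frac12\sum_i|\mathcal{S}_i|\,\|x_i-a\|^2=-c\sum_{\sigma\in\mathcal{S}}n\,\mathrm{Vol}_n(\sigma)^2\le 0$ with $c>0$, by applying Euler's identity to each simplex separately. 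With this replacement, boundedness, the weighted gradient structure, {\L}ojasiewicz convergence, and the vanishing of every $\mathrm{Vol}_n$ over $\mathcal{S}$ all go through unchanged. The inconsistency is in the paper's own definition of \eqref{E-2}, not an error you introduced, but your proof should say which reading it adopts and, if it is the $1/|\mathcal{S}_i|$ one, use the weighted quantities.
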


If $\mathcal{S}=\{1,\cdots,N\} ^{n+1}$, then for every $(j_1, \cdots, j_{n+1})\in \mathcal{S}$ we can find an $(n-1)$-dimensional affine subspace $P^\infty$ satisfying
$\{x_{j_1}^\infty, \cdots, x_{j_{n+1}}^\infty\}\subset P^\infty$. Since an $(n-1)$-dimensional affine subspace can be determined by choosing $n$ points along it, we know that $\{x_1, \cdots, x_n, y\}\subset P^\infty_1$ and $\{x_1, \cdots, x_n, z\}\subset P^\infty_2$ for some $P_1, P_2\in \mathcal{A}_d^{n-1}$, and this leads to the existence of some $\{x_1, \cdots, x_n, y,z\}\subset P^\infty\in \mathcal{A}_d^{n-1}$. More generally, if $\{x_1, \cdots, x_n, x_\ell\}\subset P^\infty_\ell\in\mathcal{A}_d^{n-1}$ for all $n+1\leq\ell\leq N$ and if there is no $Q\in\mathcal{A}_d^{n-2}$ which contains $\{x_1, \cdots, x_n\}$, then there exists $\mathcal{P}\in\mathcal{A}_d^{n-1}$  containing $\{x_1, \cdots, x_N\}$.\\

We choose a set of base points $\{x_{j_1},\cdots,x_{j_n} \}$ for distinct $j_\ell\in[N]$  and put all simplices including those base points into $\mathcal{S}$, i.e.,
\[
[(j_1, \cdots, j_n, i)]\subset\mathcal{S}
\]
for all $i\in [N]\backslash\{j_1, \cdots, j_n\}$. This action increases the number of element of $\mathcal{S}$ by $n!\times (N-n)=O(Nne^n)$. If $\{x_{j_1}^\infty, \cdots, x_{j_n}^{\infty}\}$ is contained in an $(n-2)$-dimensional affine subspace, then we cannot guarantee that $\{x_1^\infty, \cdots, x_N^\infty\}$ is contained in the same $(n-1)$-dimensional affine subspace. To prevent this issue, we have to choose more sets of base points, say, $\{(j_1^k, \cdots, j_n^k)\}$ for $1\leq k\leq B$. Here, $B$ is the number of base points independent to the number of particles $N$. If we choose $B$ sets of base points, then we have
\[
|\mathcal{S}|\leq B\cdot n!\cdot (N-n)=O(BNne^n).
\]
Here, we used inequality instead of equality since some pairs can be counted more than once. Then the computational cost introduced in \eqref{E-2-1} is
\[
O(BNne^ndI).
\]
Hence, we reduced the order of $N$ in the computational cost.

\begin{corollary}\label{C5.1}
Let $\{x_i\}_{i=1}^N$ be a solution to system \eqref{E-1} for some $n\geq1$. For a given index set
\[
(j_1, j_2, \cdots, j_n)\in [N]^n
\]
satisfying  $1\leq j_1<j_2<\cdots <j_n\leq N$, suppose that  $\mathcal{S}$ contains $(j_1, j_2, \cdots, j_n, i)$ for all $i\in [N]\backslash\{j_1, \cdots, j_n\}$. Then, there exists an $(n-1)$-dimensional affine subspace $P^\infty\subset \mathbb{R}^{d}$ such that 
\[\lim_{t\to\infty } x_i(t)=:x_i^\infty\in P^\infty,\quad \forall~i\in [N] \]
for generic initial data $\{x_i^0\}_{i=1}^N$.
\end{corollary}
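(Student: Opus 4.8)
The plan is to reduce the statement to a single genericity fact about the limiting base points and then close it with an elementary rigidity property of affine subspaces. By hypothesis, for every $i\in[N]\setminus\{j_1,\dots,j_n\}$ the tuple $(j_1,\dots,j_n,i)$ (and all of its permutations) belongs to $\mathcal S$. First I would apply Theorem \ref{T5.1} to system \eqref{E-2}: it yields the existence of the limits $x_i^\infty=\lim_{t\to\infty}x_i(t)$ for all $i$, and, for each such $i$, an affine subspace $P_i^\infty\in\mathcal A_d^{n-1}$ with
\[
\{x_{j_1}^\infty,\dots,x_{j_n}^\infty,x_i^\infty\}\subset P_i^\infty .
\]
In particular the $n$ limiting base points $x_{j_1}^\infty,\dots,x_{j_n}^\infty$ lie in every $P_i^\infty$, i.e. in $\bigcap_{i} P_i^\infty$.

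The geometric core is the rigidity fact that $n$ affinely independent points in $\bbr^d$ lie on a \emph{unique} $(n-1)$-dimensional affine subspace. Suppose, then, that the limiting base points are affinely independent, i.e. that no $Q\in\mathcal A_d^{n-2}$ contains $\{x_{j_1}^\infty,\dots,x_{j_n}^\infty\}$; this is exactly the nondegeneracy hypothesis isolated in the chain argument preceding the statement. Let $P^\infty\in\mathcal A_d^{n-1}$ be the unique such subspace through these $n$ points. Each $P_i^\infty$ is an $(n-1)$-dimensional affine subspace containing the same $n$ affinely independent points, hence $P_i^\infty=P^\infty$ for all $i$, whence $x_i^\infty\in P^\infty$. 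Since the base points themselves lie in $P^\infty$ by construction, we conclude $\{x_1^\infty,\dots,x_N^\infty\}\subset P^\infty$, which is the assertion of the corollary.

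It remains to justify that this nondegeneracy holds for generic initial data, and this is the step I expect to be the main obstacle. Affine dependence of the base points is the vanishing of $\mathrm{Vol}_{n-1}(x_{j_1},\dots,x_{j_n})$, which by Proposition \ref{P2.1} (with $n-1$ in place of $n$) is a polynomial in the coordinates; hence the degenerate locus $\mathcal D\subset\bbr^{dN}$ is a proper real-algebraic set, closed and of Lebesgue measure zero. What must be shown is that the set of initial data $\{x_i^0\}$ whose limit $\{x_i^\infty\}$ lands in $\mathcal D$ is again null. Since the right-hand side of \eqref{E-2} is real-analytic, each trajectory converges to a single equilibrium by the \L ojasiewicz gradient inequality, and Remark \ref{R4.2} shows that configurations supported on a generic $(n-1)$-plane with base points in general position are equilibria with affinely independent base points; these form an open stratum of the equilibrium set. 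I would then argue that trajectories starting off a null set converge into this nondegenerate stratum, controlling the infinite-time limit map via analytic dependence on finite time intervals together with a stratification/transversality argument. The delicate point — and the place where the real work lies — is precisely this passage from regularity of the finite-time flow to measure-theoretic control of the $t\to\infty$ limit map, ruling out concentration of limits on the lower-dimensional degenerate stratum.
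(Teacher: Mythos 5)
Your first two paragraphs are exactly the paper's argument: apply Theorem \ref{T5.1} to each tuple $(j_1,\dots,j_n,i)\in\mathcal S$ to obtain planes $P_i^\infty\in\mathcal A_d^{n-1}$ all containing the limiting base points $x_{j_1}^\infty,\dots,x_{j_n}^\infty$, then invoke uniqueness of the $(n-1)$-dimensional affine subspace through $n$ affinely independent points to force $P_i^\infty=P^\infty$ for every $i$. The difference lies entirely in how the word ``generic'' is handled. The paper's proof simply asserts that $P^\infty$ is determined ``uniquely'' by the $n$ base points --- silently assuming their limits are affinely independent --- so in the paper the genericity hypothesis is doing its work by fiat, with no proof that degenerate limits occur only for a negligible set of initial data. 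You, by contrast, correctly isolate this nondegeneracy as the real mathematical content of ``generic'' and attempt to prove it; that attempt is incomplete, and you are right about where it breaks: since every equilibrium configuration lies in a Lebesgue-null subset of $\bbr^{dN}$ (all limits land on some $(n-1)$-plane, or worse), the gradient flow necessarily maps positive-measure sets of initial data into measure-zero sets of limits, so no argument of the form ``the preimage of a null set under a nice map is null'' can work for the time-infinity limit map. Ruling out convergence to the degenerate stratum would instead require a dynamical argument (e.g.\ showing degenerate equilibria have center-stable manifolds of positive codimension), which neither you nor the paper provides. In short: read with ``generic'' as an effective hypothesis that $\{x_{j_1}^\infty,\dots,x_{j_n}^\infty\}$ is affinely independent, your proof is complete and identical to the paper's; the残 further claim you honestly flag as open is equally open in the paper itself.
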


\begin{proof}
It follows from Theorem \ref{T5.1} that  for any index $i\in [N]\backslash\{j_1, \cdots, j_n\}$, the set 
\[
\{x_{j_1}^\infty, \cdots, x_{j_n}^\infty, x_i^\infty\}
\]
is contained in an $(n-1)$-dimensional affine subspace $P_i^\infty\subset \bbr^d$. Hence, we can determine an $(n-1)$-dimensional affine subspace $P^\infty$ which contains $\{x_{j_1}^\infty, \cdots, x_{j_n}^\infty\}$ uniquely, and this implies that
\[
P^\infty=P_i^\infty
\]
for all $i\in [N]\backslash\{j_1, \cdots, j_n\}$. Finally, we conclude $\{x_1^\infty, \cdots, x_N^\infty\}\subset P^\infty$.
\end{proof}

\begin{example}\label{E5.1}
As an example of $\mathcal{S}$ in Corollary \ref{C5.1}, we provide the following two sets denoted by $\mathcal{S}_2$ and $\mathcal{S}_3$ for $n=2$ and $n=3$, respectively.

\noindent(1) When $n=2$,  $\mathcal{S}_2$ is defined as 
\[
[(i, j, k)]\subset\mathcal{S}_2\quad\text{if and only if}\quad (i, j, k)=(1, 2,\ell),\quad\forall~3\leq \ell\leq N.
\]

\noindent(2) Similarly for $n=3$, $\mathcal S_3$ is defined as 
\[
[(i, j, k, m)]\subset\mathcal{S}_3\quad\text{if and only if}\quad (i, j, k)=(1, 2,3,\ell),\quad\forall~4\leq \ell\leq N.
\]

\end{example}

Now, we provide two numeric results for $\mathcal{S}_2$ and $\mathcal{S}_3$ introduced in Example \ref{E5.1}. For numerical implementation, same parameters are chosen as in the previous section.  We clearly see that Figures \ref{Fig3} and \ref{Fig4} show the exactly same asymptotic behaviors with Figures \ref{Fig1} and \ref{Fig2}, respectively.

\begin{figure}[h]
\includegraphics[width=7cm]{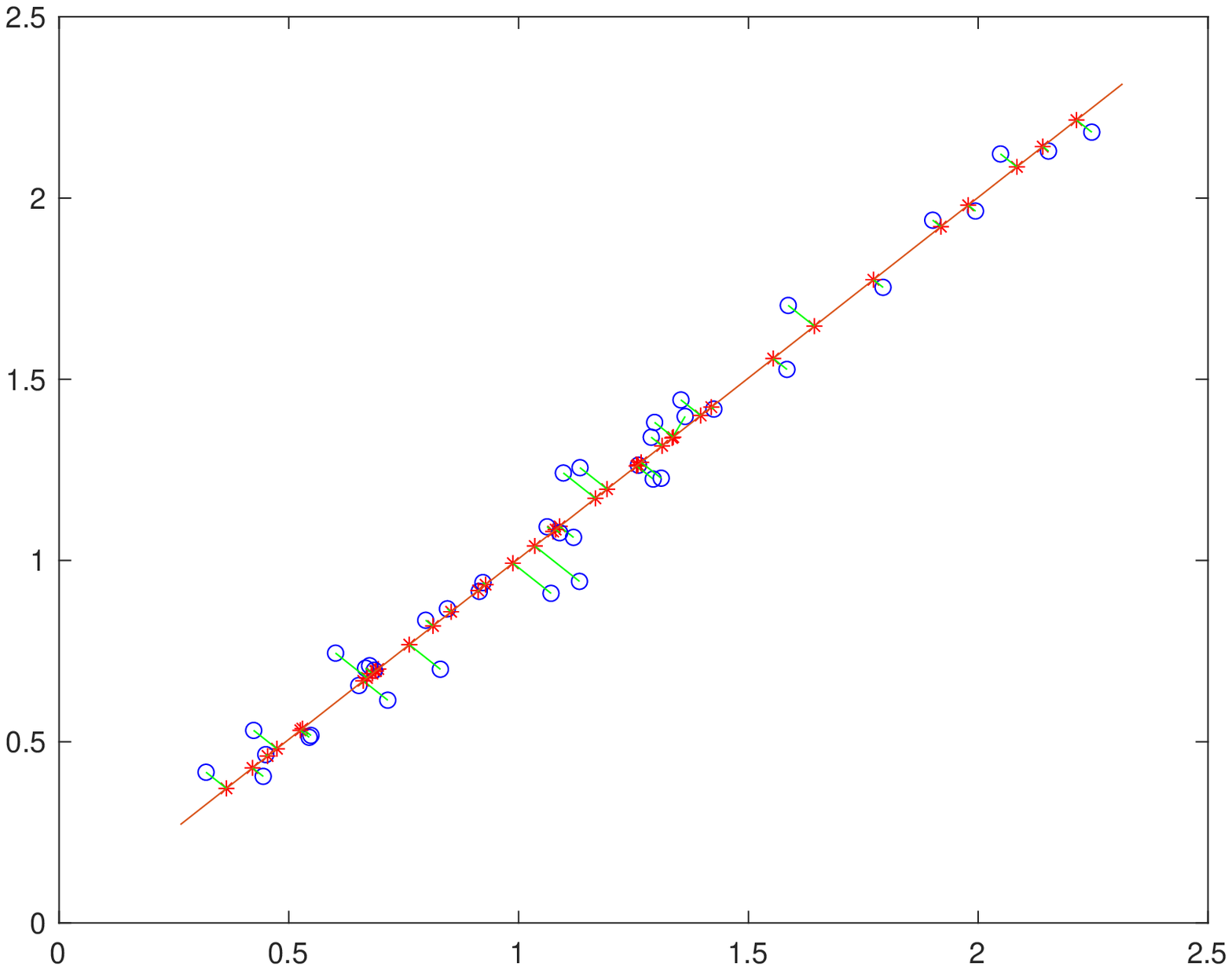}\quad
\includegraphics[width=7cm]{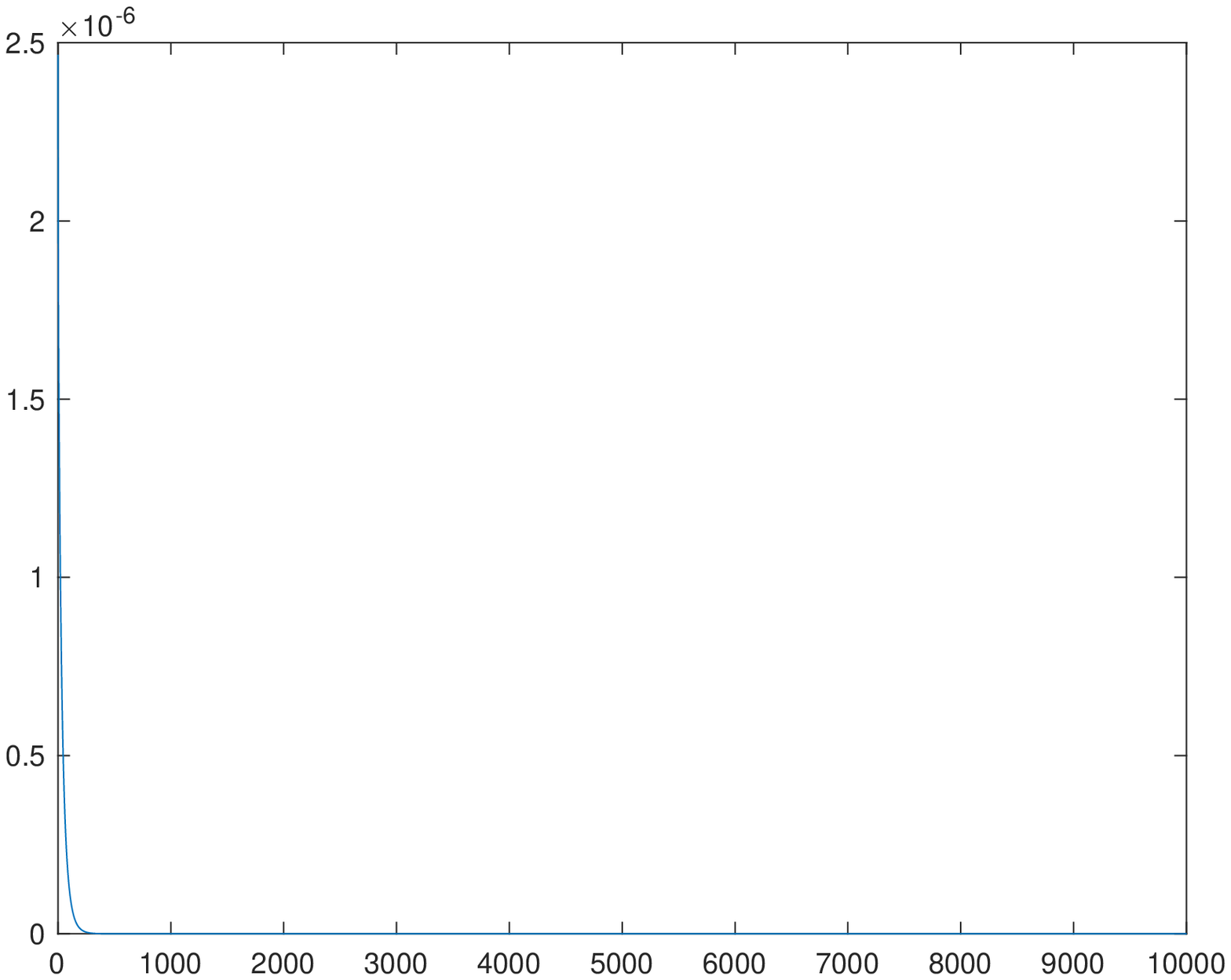}
\caption{If particles $\{x_i\}_{i=1}^N$ follows system \eqref{C-1} with $n=2$ and $\mathcal{S}=\mathcal{S}_2$ introduced in Example \eqref{E5.1}, then the particles are aligned on the same line although the interactions are reduced(Left figure). The right graph plots the temporal evolution of the average area of triangles $x_ix_jx_k$. In this simulation, we choose the initial data as the perturbed points from the line. }\label{Fig3}
\end{figure}

\begin{figure}[h]
\includegraphics[width=7cm]{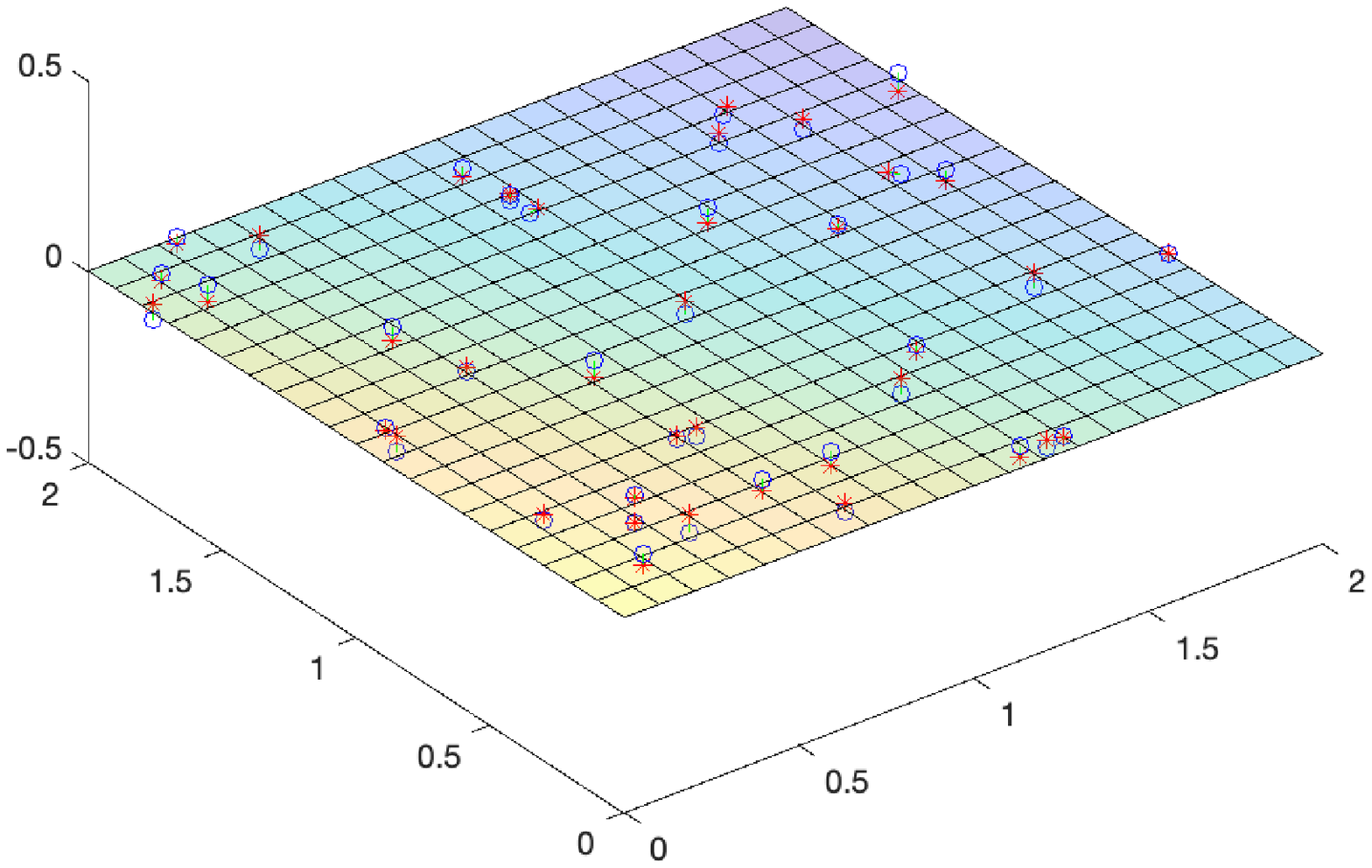}\quad
\includegraphics[width=7cm]{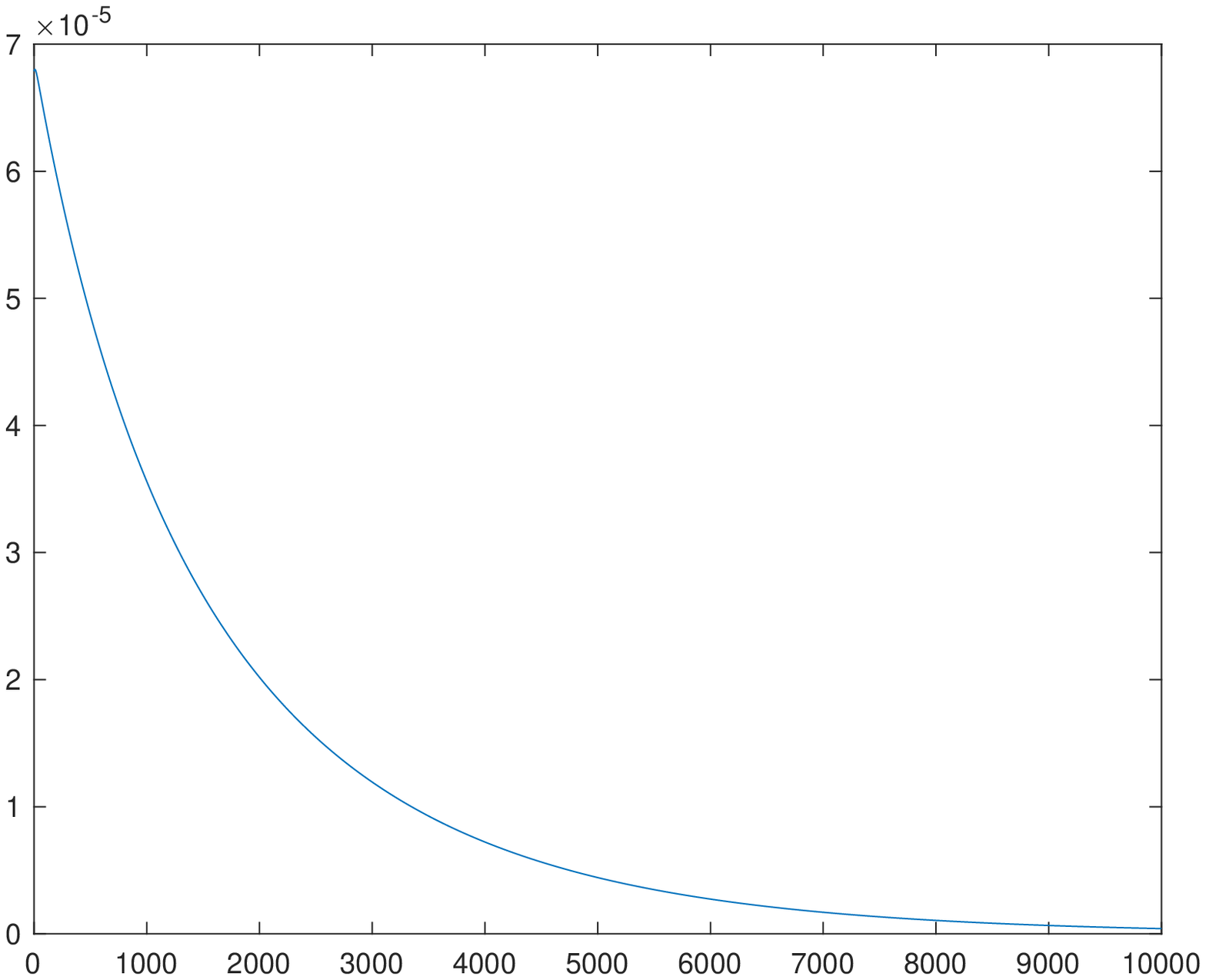}
\caption{If particles $\{x_i\}_{i=1}^N$ follows system \eqref{C-1} with $n=3$ and $\mathcal{S}=\mathcal{S}_3$ introduced in Example \eqref{E5.1}, then the particles are aligned on the same plane although the interactions are reduced(Left figure). The right graph plots the temporal evolution of the average volume of tetrahedrons $x_ix_jx_kx_\ell$. In this simulation, we choose the initial data as the perturbed points from the plane.}\label{Fig4}
\end{figure}

\section{Conclusion} \label{sec:6}
\setcounter{equation}{0}

We studied a generalized linear consensus model in the way that a simplex structure becomes higher. Note that the existing linear consensus model is written as a gradient flow with the sum of squared distances between each two points, and the asymptotic behavior of the model is the convergence of all points to a single point. In this work, we interpreted the model under a higher simplicial framework. To be more specific, since distance between points would be regarded as $1$-dimensional volume between 0-simplices, they can be generalized as $n$-dimensional volume and $(n-1)$-simplices. To this end, we suggested a new gradient flow on higher simplicies and show that a solution gathers to a common $(n-1)$-dimensional affine subspace. This result exactly coincides with the case of $n=1$. Finally, we provide several numerical examples to visualize the theoretical results and reduced model with fewer interaction terms which exhibits similar asymptotic properties.

\section*{Conflict of Interest}
\textbf{The authors have no conflicts of interest to disclose.}

\section*{Data Availability}
\textbf{The data that support the findings of this study are available within the article.}

\appendix
\section{Proof of Lemma \ref{L4.1}}\label{app:A}

For $n=1$, since \eqref{C-1} reduces to the linear consensus model, desired assertion directly follows. On the other hand for $n=2$, we use Example \ref{E2.1}(1) to see that \eqref{C-1} becomes   
\begin{align*}
\dot{x}_i&=-\frac{\kappa_2}{2N^2}\sum_{k, \ell=1}^N \nabla_{x_i} \mathrm{Vol}_2(x_i, x_k, x_\ell)^2\\
&=-\frac{\kappa_2}{2N^2}\sum_{k,\ell=1}^N \nabla_{x_i}\left(\frac{1}{16}(2d_{ik}^2d_{kv}^2+2d_{i\ell}^2d_{k\ell}^2+2d_{ik}^2d_{i\ell}^2-d_{ik}^4-d_{k\ell}^4-d_{i\ell}^4)\right)\\
&=-\frac{\kappa_2}{8N^2}\sum_{k, \ell=1}^N\left((d_{k\ell}^2+d_{i\ell}^2-d_{ik}^2)(x_i-x_k)+(d_{k\ell}^2+d_{ik}^2-d_{i\ell}^2)(x_i-x_\ell)\right)\\
&=\frac{\kappa_2}{8N^2}\sum_{k,\ell=1}^N\left(d_{k\ell}^2(x_k+x_\ell-2x_i)+(d_{i\ell}^2-d_{ik}^2)(x_k-x_\ell)\right).
\end{align*}
Thus, we find
 \begin{equation} \label{C-10}
\dot{x}_i-\dot{x}_j=\frac{\kappa_2}{8N^2}\sum_{k, \ell=1}^N\left(-2d_{k\ell}^2(x_i-x_j)+(d_{i\ell}^2-d_{ik}^2-d_{j\ell}^2+d_{jk}^2)(x_k-x_\ell)\right).
\end{equation} 
We observe from the definition $d_{ij}= \|x_i - x_j\|$:
\[
d_{i\ell}^2-d_{ik}^2-d_{j\ell}^2+d_{jk}^2=2\langle x_i-x_j, x_k-x_\ell\rangle.
\]
Then, \eqref{C-10} becomes 
\[
\dot{x}_i-\dot{x}_j=\frac{\kappa_2}{4N^2}\sum_{k, \ell=1}^N\left(-d_{k\ell}^2(x_i-x_j)+\langle x_i-x_j, x_k-x_\ell\rangle (x_k-x_\ell)\right),
\]
which yields 
\begin{align*}
\frac{\dd}{\dt}\|x_i-x_j\|^2 = \frac{\kappa_2}{2N^2}\sum_{k, \ell=1}^N\left(-\|x_k - x_\ell\|^2\|x_i - x_j\|^2 +\langle x_i-x_j, x_k-x_\ell\rangle^2\right) \leq0 .
\end{align*}
We consider the case of $n\geq3$. For $x_i, x_{j_1}, \cdots, x_{j_n} \in \bbr^d$, define the following two affine sets: 
\begin{align*}
&P_{j_1j_2\cdots j_n}:=\left\{x:~x=\sum_{\alpha=1}^n a_\alpha x_{j_\alpha}\quad\text{where}\quad \sum_{\alpha=1}^n a_\alpha=1
\right\}, \\
& Q_{j_1j_2\cdots j_n}:=\left\{x:~x=\sum_{\alpha=1}^n a_\alpha x_{j_\alpha}\quad\text{where}\quad \sum_{\alpha=1}^n a_\alpha=0
\right\}.
\end{align*}
We know that  $P_{j_1j_2\cdots j_n}$ is the minimal affine subspace of $\mathbb{R}^d$ containing  $x_{j_1}, \cdots, x_{j_n}$, 
\[
P_{j_1j_2\cdots j_n}-P_{j_1j_2\cdots j_n}=Q_{j_1j_2\cdots j_n}.
\] 
We denote  $H^i_{j_1j_2\cdots j_n}$ the orthogonal projection of $x_i$ onto $P_{j_1j_2\cdots j_n}$, i.e.,
\[
H^i_{j_1j_2\cdots j_n}:=\mathrm{argmin}_{y\in P_{j_1j_2\cdots j_n}}\|x_i-y\|.
\]
Then, one can easily verify the following relation:
\[
\mathrm{Vol}_n(x_i, x_{j_1}, \cdots, x_{j_n})=\frac{1}{n}\mathrm{Vol}_{n-1}(x_{j_1}, \cdots, x_{j_n})\times \|H^i_{j_1j_2\cdots j_n}-x_i\|.
\]
For any smooth curve $x_i(t):\mathbb{R}\to \mathbb{R}^{d}$,  we observe 
\[\begin{aligned}
\frac{\dd}{\dt}\|H^i_{j_1j_2\cdots j_n} -x_i \|^2&=2\langle H^i_{j_1j_2\cdots j_n}(t)-x_i(t), \dot{H}^i_{j_1j_2\cdots j_n}(t)-\dot{x}_i(t)\rangle.
\end{aligned} \]
Since  $\dot{H}^i_{j_1j_2\cdots j_n}(t) \in Q_{j_1j_2\cdots j_n}$ and the following relation holds
 \begin{equation}\label{4.9}
 \left\langle y,x_i(t)-H^i_{j_1j_2\cdots j_n}(t)\right\rangle=0,\quad \forall~y\in Q_{j_1j_2\cdots j_n},
 \end{equation}
we calculate the gradient of $\mbox{Vol}_n$ 
\begin{equation}\label{4.10}
\begin{aligned}
\nabla_{x_i}\mathrm{Vol}_n(x_i, x_{j_1}, \cdots, x_{j_n})^2&=\frac{1}{n^2}\mathrm{Vol}_{n-1}(x_{j_1}, \cdots, x_{j_n})^2 \nabla_{x_i}\|H^i_{j_1j_2\cdots j_n}-x_i\|^2\\
&=\frac{2}{n^2}\mathrm{Vol}_{n-1}(x_{j_1}, \cdots, x_{j_n})^2 (x_i-H^i_{j_1j_2\cdots j_n}).
\end{aligned}
\end{equation}

\noindent Now, we substitute the relation \eqref{4.10} into system \eqref{C-1} to obtain
\begin{align} \label{C-11}
\begin{aligned}
\dot{x}_i-\dot{x}_k&=-\frac{\kappa_n}{2N^n}\sum_{j_1, \cdots, j_n=1}^N \frac{2}{n^2}\mathrm{Vol}_{n-1}(x_{j_1}, \cdots, x_{j_n})^2 (x_i-H^i_{j_1j_2\cdots j_n}-x_k+H^k_{j_1j_2\cdots j_n})\\
&=-\frac{\kappa_n}{N^n n^2}\sum_{j_1, \cdots, j_n=1}^N\mathrm{Vol}_{n-1}(x_{j_1}, \cdots, x_{j_n})^2\big((x_i-x_k)-(H^i_{j_1j_2\cdots j_n}-H^k_{j_1j_2\cdots j_n})\big).
\end{aligned}
\end{align}
Since $H^i_{j_1j_2\cdots j_n}-H^k_{j_1j_2\cdots j_n}$ is an element of $Q_{j_1j_2\cdots j_n}$, one can use \eqref{4.9} to obtain

\[\left\langle H^i_{j_1j_2\cdots j_n}-H^k_{j_1j_2\cdots j_n},x_i-H^i_{j_1j_2\cdots j_n}\right\rangle=\left\langle H^i_{j_1j_2\cdots j_n}-H^k_{j_1j_2\cdots j_n},x_k-H^k_{j_1j_2\cdots j_n}\right\rangle=0, \]
which gives 
\begin{equation} \label{C-12}
\left\langle x_i-x_k,(x_i-x_k)-(H^i_{j_1j_2\cdots j_n}-H^k_{j_1j_2\cdots j_n})\right\rangle=\left\|(x_i-x_k)-(H^i_{j_1j_2\cdots j_n}-H^k_{j_1j_2\cdots j_n})\right\|^2.
 \end{equation}
Therefore, we multiply \eqref{C-11} with $x_i -x_k$ and use \eqref{C-12} to establish the desired estimate:
\begin{equation*}
\begin{aligned}
&\frac{\dd}{\dt}\|x_i-x_k\|^2\\
&=-\frac{2\kappa_n}{N^nn^2}\sum_{j_1,\cdots, j_n=1}^N \mathrm{Vol}_{n-1}(x_{j_1}, \cdots, x_{j_n})^2\left\|(x_i-H^i_{j_1j_2\cdots j_n})-(x_k-H^k_{j_1j_2\cdots j_n})\right\|^2\leq 0.
\end{aligned}
\end{equation*}

\section{Proof of Lemma \ref{L4.2}}\label{app:B}

Since each $H^i_{j_1j_2\cdots j_n}$ is an element of $P_{j_1j_2\cdots j_n}$, we get 
	\[H^i_{j_1j_2\cdots j_n}-\frac{1}{N}\sum_{k=1}^N H^k_{j_1j_2\cdots j_n}\in Q_{j_1j_2\cdots j_n},\quad \forall~i\in [N]. \]
	Then, one can use \eqref{4.9} to obtain 
	\[\left\langle x_\ell-H_{j_1j_2\cdots j_n}^\ell,H^i_{j_1j_2\cdots j_n}-\frac{1}{N}\sum_{k=1}^N H^k_{j_1j_2\cdots j_n}\right\rangle=0,\quad \forall~i,\ell\in [N], \]
	which yields  
\[\begin{aligned}
&\left\langle x_i-x_c,(x_i-H^i_{j_1j_2\cdots j_n})-\frac{1}{N}\sum_{k=1}^N(x_k-H^k_{j_1j_2\cdots j_n})\right\rangle\\
&=\left\langle (x_i-x_c)-\left(H^i_{j_1j_2\cdots j_n}-\frac{1}{N}\sum_{k=1}^N H^k_{j_1j_2\cdots j_n}\right),(x_i-H^i_{j_1j_2\cdots j_n})-\frac{1}{N}\sum_{k=1}^N(x_k-H^k_{j_1j_2\cdots j_n})\right\rangle\\
&=\left\|(x_i-H^i_{j_1j_2\cdots j_n})-\frac{1}{N}\sum_{k=1}^N(x_k-H^k_{j_1j_2\cdots j_n}) \right\|^2\geq 0.
\end{aligned} \]	
Now, we substitute the relation \eqref{4.10} into system \eqref{C-1} to obtain	
\[\begin{aligned}
&\frac{\dd}{\dt}\|x_i - \bar x\|^2\\
&=-\frac{2\kappa_n}{N^nn^2}\sum_{j_1,\cdots, j_n=1}^N \mathrm{Vol}_{n-1}(x_{j_1}, \cdots, x_{j_n})^2\left\|(x_i-H^i_{j_1j_2\cdots j_n})-\frac{1}{N}\sum_{k=1}^N(x_k-H^k_{j_1j_2\cdots j_n}) \right\|^2\leq 0.
\end{aligned} \]
Finally, since the barycenter $\bar x$ is a constant of motion, we can conclude the desired result.	

\section{Proof of Theorem \ref{T4.1}}\label{app:C}

Let $X^\infty:=\{x_i^\infty\}_{i=1}^N$ be an equilibrium to \eqref{C-1}. Then, it satisfies 
\[
\mathrm{Vol}_{n-1}(x_{j_1}^\infty, \cdots, x_{j_n}^\infty)^2\left\|(x_i^\infty-H^{i,\infty}_{j_1j_2\cdots j_n})-(x_k^\infty-H^{k,\infty}_{j_1j_2\cdots j_n})\right\|^2=0, \quad \forall i,k,j_1,\cdots,j_n.
\]
We split the proof into two cases. \newline

\noindent (i) If $\mathrm{Vol}_{n-1}(x_{j_1}^\infty, \cdots, x_{j_n}^\infty)\neq0$ for some $j_1, \cdots, j_n$, then
\[
x_i^\infty-x_k^\infty\in Q_{j_1j_2\cdots j_n}, \quad  i,k \in [N].
\]
In particular,  $x_i^\infty-x_{j_1}^\infty\in Q_{j_1\cdots j_n}$ and  $x_i^\infty\in P_{j_1\cdots j_n}$ for all $i\in [N]$.  Since this property holds for all indices $i$ and $P_{j_1\cdots j_n}$ is a subspace of $\mathbb{R}^d$ with dimension  $n-1$, we set $P^\infty:=P_{j_1\cdots j_n}$.\\

\noindent (ii) If  $\mathrm{Vol}_{n-1}(x_{j_1}^\infty, \cdots, x_{j_n}^\infty)=0$ for all $j_1, \cdots, j_n$, we have 
\begin{equation}\label{4.11}
\mathrm{Vol}_{n-1}(0,x_{j_2}^\infty-x_{j_1}^\infty, \cdots, x_{j_n}^\infty-x_{j_1}^\infty)=0,\quad \forall~j_1, \cdots, j_n\in [N].
\end{equation}
Hence,  the dimension of $Q:=\textup{span}\{x_{j_k}^\infty-x_{j_1}^\infty:2\leq k\leq n\}$ is less than or equal to  $n-2$. Suppose to the contrary that the dimension of  $Q$ is greater than or equal to $n-1$.  Then one can find $n-1$ linearly independent subset of $Q$ which spans $n-1$ dimensional subspace of $\mathbb{R}^d$, which contradicts \eqref{4.11}. Since the dimension of $Q$ is  $n-2$, it suffices to define $P^\infty:=x_1^\infty+Q$.


\begin{thebibliography}{99}

\bibitem{BGL}  Benson, A. R., Gleich, D. F., and Leskovec, J.: \textit{Higher-order organization of complex networks}. Science {\bf 353} (2016), 163-166.

\bibitem{CCF} Chacon, J., Chen, M., and Fetecau, R. C.: \textit{Safe coverage of moving domains for vehicles with second order dynamics}, IEEE Trans. Automat. Contr. Early Access

\bibitem{CLC} Caponigro, M., Lai, A. C., Chiara, Lai. A. and Piccoli, B.: \textit{A nonlinear model of opinion formation on the sphere}. Discrete Contin Dyn Syst. A {\bf35} (2015), 4241-4268.

\bibitem{DB} D\"{o}rfler, F. and Bullo, F.: \textit{Synchronization in complex networks of phase oscillators: a survey}, Automatica {\bf50} (2013), 1539-1564.

\bibitem{DHJ} Dong, J.-G., Ha, S.-Y., Jung, J., and Kim, D.: \textit{On the Stochastic Flocking of the Cucker--Smale Flock with Randomly Switching Topologies}. SIAM Journal on Control and Optimization, {\bf 58} (2020), 2332-2353.

\bibitem{DHK} Dong, J.-G., Ha, S.-Y., and Kim, D.: \textit{Emergence of mono-cluster flocking in the thermomechanical Cucker–Smale model under switching topologies}. Analysis and Application, {\bf 19} (2021), 305-342.

\bibitem{GB} Grilli, J., Barab\'{a}s, G., Michalska-Smith, M., and Allesina, S.: \textit{Higher-order interactions stabilize dynamics in competitive network models.} Nature {\bf548} (2017), 210-213. 

\bibitem{JC} J\'{a}cimovi\'{c}, V. and Crnki\'{c}, A.: Low-dimensional dynamics in non-Abelian Kuramoto model on the 3-sphere, Chaos {\bf 28} (2018) 083105.

\bibitem{JLL} Jin, S., Li, L. and Liu, J.-G., \textit{Random Batch Methods (RBM) for interacting particle systems.} J. Comput. Phys. {\bf 400} (2020), 108877.

\bibitem{KJ} Kumar, A. and Jalan, S. : \textit{Explosive synchronization in interlayer phase-shifted Kuramoto oscillators on multiplex networks}. Chaos {\bf31} (2021), 041103.

\bibitem{Ku} Kuramoto, Y.: \textit{Self-entrainment of a population of coupled non-linear oscillators}. Int. Symp. on Mathematical Problems in Theoretical Physics (Lecture Notes in Physics vol 39) ed H Araki (Berlin: Springer) (1975), 420-422.

\bibitem{Ku2} Kuramoto, Y.: \textit{Chemical Oscillations, Waves and Turbulence}, Springer-Verlag, Berlin (1984).

\bibitem{Lo1} Lohe, M. A.: \textit{Higher-order synchronization on the sphere.} J. Phys. Complex. {\bf3} (2022), 015003.

\bibitem{Lo2} Lohe, M. A.: \textit{Non-Abelian Kuramoto models and synchronization}. J. Phys. A: Math. Theor. {\bf42} (2009), 395101.

\bibitem{Lo3} Lohe, M.A.: \textit{Combined higher-order interactions of mixed symmetry on the sphere}. Chaos {\bf 32} (2022), 023114.

\bibitem{MPG}  Markdahl, J., Proverbio, D., and Goncalves, J.: \textit{Robust synchronization of heterogeneous robot swarms on the sphere}. In 2020 59th IEEE Conference on Decision and Control (CDC), pages 5798–5803. IEEE, 2020.

\bibitem{SB} Singh, P., Baruah, G.: \textit{Higher order interactions and species coexistence.} Theor Ecol {\bf14} (2021), 71-83.

\bibitem{So} Sommerville, D. M. Y.: \textit{An Introduction to the Geometry of n Dimensions.} New York: Dover Publications (1958).

\bibitem{TB} Topaz, C. M. and Bertozzi, A. L.: \textit{Swarming patterns in a two-dimensional kinematic model for biological groups}. SIAM J. Appl. Math., {\bf 65} (2004), 152–174.

\bibitem{YMB} Yuanzhi Li, Margaret M Mayfield, Bin Wang, Junli Xiao, Kamil Kral, David Janik, Jan Holik, Chengjin Chu: \textit{Beyond direct neighbourhood effects: higher-order interactions improve modelling and predicting tree survival and growth.} National Science Review {\bf 8} (2021) , nwaa244.

\bibitem{Wi} Winfree, A.: \textit{Biological rhythms and the behavior of populations of coupled oscillators}. J. Theoret. Biol. {\bf 16} (1967), 15-42.

























%
%
%
%
%


\end{thebibliography}
\end{document}